\documentclass[reqno]{amsart}
\usepackage{amsmath,amsfonts,amssymb,amscd}
\usepackage{stmaryrd}
\usepackage[arrow,matrix,cmtip,curve]{xy}
\usepackage{fullpage}
\usepackage{tikz}
\usetikzlibrary{positioning,arrows}

\numberwithin{equation}{section}

\theoremstyle{definition}
\newtheorem{theorem}{Theorem}[section]

\newtheorem{corollary}[theorem]{Corollary} 
\newtheorem{definition}[theorem]{Definition} 
\newtheorem{example}[theorem]{Example}
\newtheorem{hypothesis}[theorem]{Hypothesis} 
\newtheorem{lemma}[theorem]{Lemma}

\newtheorem{thm}{Theorem} 

\DeclareMathOperator\Aut{Aut}
\DeclareMathOperator\diag{diag}
\DeclareMathOperator\Ext{Ext}

\DeclareMathOperator\GL{GL}

\DeclareMathOperator\GrMod{GrMod}
\DeclareMathOperator\hdet{hdet}
\DeclareMathOperator\Hom{Hom}

\newcommand\grp[1]{\langle #1 \rangle}
\newcommand\inv{^{-1}}
\newcommand\iso{\cong}
\newcommand\kk{\Bbbk}
\newcommand\op{\mathrm{op}}
\newcommand\tensor{\otimes}
\newcommand\tornado{\xi}

\newcommand\NN{\mathbb N}

\newcommand\degs{s}
\newcommand\dimd{d}
\newcommand\source{\mathfrak{s}}
\newcommand\target{\mathfrak{t}}

\newenvironment{psmatrix}
  {\left(\begin{smallmatrix}}
  {\end{smallmatrix}\right)}

\usepackage{xcolor}

\begin{document}

\title{Four-vertex quivers supporting twisted graded Calabi--Yau algebras}

\author[Gaddis]{Jason Gaddis}
\address{(Gaddis, Nguyen, Wright) Department of Mathematics, Miami University, 301 S. Patterson Ave., Oxford, Ohio 45056} 
\email{gaddisj@miamioh.edu, nguye123@miamioh.edu, wrigh245@miamioh.edu}

\author[Lamkin]{Thomas Lamkin}
\address{(Lamkin) Department of Mathematics, University of California San Diego, La Jolla, California 92903}
\email{tlamkin@ucsd.edu}

\author[Nguyen]{Thy Nguyen}

\author[Wright]{Caleb Wright}

\subjclass[2020]{Primary 16E65; 16P90; 16S38; 16W50; secondary 16S35}
\keywords{Twisted graded Calabi--Yau algebra, quiver, graded twist, Mckay matrix, skew group ring, Ore extension}
\begin{abstract}
We study quivers supporting twisted graded Calabi--Yau algebras. 
Specifically, we classify quivers on four vertices in which the Nakayama automorphism acts on the degree zero part by either a four-cycle, a three-cycle, or two two-cycles. In order to realize algebras associated to some of these quivers, we show that graded twists of a twisted graded Calabi--Yau algebra is another algebra of the same type.
\end{abstract}

\maketitle

\section{Introduction}

Calabi--Yau algebras are pervasive in the mathematics and mathematical physics literature. In the philosophy of Ginzburg \cite{ginz}, $\NN$-graded Calabi--Yau algebras arising in nature should have relations determined by a superpotential.
While that analysis focused on the connected case, that is, deformations of the polynomial ring, the theory has grown in multiple ways since that seminal work. 
By a result of Bocklandt, a graded Calabi--Yau algebra of (global) dimension three is the path algebra of a quiver with relations coming from a homogeneous superpotential \cite[Theorem 3.1]{Bock}. 

One can also consider \emph{twisted} graded Calabi--Yau algebras, where the potential is twisted by some automorphism (the \emph{Nakayama} automorphism). We now have a reasonably clear understanding of twisted graded Calabi--Yau algebras in global dimension two and three due to the work of Reyes and Rogalski \cite{RR1,RR2}. Under suitable hypotheses, these algebras appear as, or are Morita equivalent to, the path algebra of a quiver with relations coming from a twisted superpotential.
What we do not understand, in general, is which quivers may appear. This article is a contribution to further understanding such quivers.

For single vertex quivers, either the quiver has two loops with a degree four superpotential, or it has three loops and a degree three superpotential \cite{ASch}. In \cite{GR1}, the authors
classified two- and three-vertex quivers supporting twisted graded Calabi--Yau algebras of global and Gelfand--Kirillov dimension three. 

We consider quivers on four vertices and, as in \cite{GR1}, 
we split our analysis according to the permutation matrix associated to the Nakayama automorphism. In this paper, we only consider certain permutation matrices and hope to complete the analysis in a later paper. We refer to the next section for terminology related to the following theorem.

\begin{thm}[Theorems \ref{thm.4cycle}, \ref{thm.3cycle}, \ref{thm.22cycle}]\label{thm.main}
Let $Q$ be a quiver with $|Q_0|=4$, let $M$ denote the adjacency matrix of $Q$, and let $\omega$ be a homogeneous superpotential on $Q$ with $\degs=\deg(\omega)$. Suppose $A=\kk Q/(\partial_a(\omega) : a \in Q_1)$ is a twisted graded Calabi--Yau algebra of dimension 3 and Gelfand--Kirillov dimension 3. If $P$ is the restriction of the Nakayama automorphism of $A$ to $A_0$, where $P$ corresponds to a four-cycle, a three-cycle, or two two-cycles, then $M$ is one of the following:
\[
\renewcommand{\arraystretch}{1.5}
\begin{array}{|c|c|l|} \hline
\degs & P & \text{possible adjacency matrices $M$} \\ \hline
3 &
\begin{psmatrix}0 & 1 & 0 & 0 \\ 0 & 0 & 1 & 0 \\ 0 & 0 & 0 & 1 \\ 1 & 0 & 0 & 0\end{psmatrix}
& 
\begin{psmatrix}0 & 0 & 0 & 3 \\ 3 & 0 & 0 & 0 \\ 0 & 3 & 0 & 0 \\ 0 & 0 & 3 & 0\end{psmatrix},
\begin{psmatrix}0 & 1 & 0 & 2 \\ 2 & 0 & 1 & 0 \\ 0 & 2 & 0 & 1 \\ 1 & 0 & 0 & 2\end{psmatrix},
\begin{psmatrix}0 & 1 & 2 & 0 \\ 0 & 0 & 1 & 2 \\ 2 & 0 & 0 & 1 \\ 1 & 2 & 0 & 0\end{psmatrix},
\begin{psmatrix}1 & 1 & 1 & 0 \\ 0 & 1 & 1 & 1 \\ 1 & 0 & 1 & 1 \\ 1 & 1 & 0 & 1\end{psmatrix},
\begin{psmatrix}1 & 1 & 0 & 1 \\ 1 & 1 & 1 & 0 \\ 0 & 1 & 1 & 1 \\ 1 & 0 & 1 & 1\end{psmatrix},
\begin{psmatrix}2 & 1 & 0 & 0 \\ 0 & 2 & 1 & 0 \\ 0 & 0 & 2 & 1 \\ 1 & 0 & 0 & 2\end{psmatrix},
\begin{psmatrix}0 & 1 & 1 & 1 \\ 1 & 0 & 1 & 1 \\ 1 & 1 & 0 & 1 \\ 1 & 1 & 1 & 0\end{psmatrix}^* \\
3 &
\begin{psmatrix}0 & 0 & 1 & 0 \\ 1 & 0 & 0 & 0 \\ 0 & 1 & 0 & 0 \\ 0 & 0 & 0 & 1\end{psmatrix}
&
\begin{psmatrix}0 & 0 & 0 & 1 \\ 0 & 0 & 0 & 1 \\ 0 & 0 & 0 & 1 \\ 1 & 1 & 1 & 2\end{psmatrix},
\begin{psmatrix}1 & 0 & 1 & 1 \\ 1 & 1 & 0 & 1 \\ 0 & 1 & 1 & 1 \\ 1 & 1 & 1 & 0\end{psmatrix},
\begin{psmatrix}0 & 1 & 1 & 1 \\ 1 & 0 & 1 & 1 \\ 1 & 1 & 0 & 1 \\ 1 & 1 & 1 & 0\end{psmatrix}^* \\
3 &
\begin{psmatrix}0 & 1 & 0 & 0 \\ 1 & 0 & 0 & 0 \\ 0 & 0 & 0 & 1 \\ 0 & 0 & 1 & 0\end{psmatrix}
&
\begin{psmatrix}1 & 0 & 2 & 0 \\ 0 & 1 & 0 & 2 \\ 0 & 2 & 1 & 0 \\ 2 & 0 & 0 & 1\end{psmatrix},
\begin{psmatrix}0 & 1 & 1 & 1 \\ 1 & 0 & 1 & 1 \\ 1 & 1 & 0 & 1 \\ 1 & 1 & 1 & 0\end{psmatrix},
\begin{psmatrix}1 & 1 & 1 & 0 \\ 1 & 1 & 0 & 1 \\ 1 & 0 & 1 & 1 \\ 0 & 1 & 1 & 1\end{psmatrix},
\begin{psmatrix}0 & 2 & 1 & 0 \\ 2 & 0 & 0 & 1 \\ 1 & 0 & 0 & 2 \\ 0 & 1 & 2 & 0\end{psmatrix},
\begin{psmatrix}1 & 0 & 1 & 1 \\ 0 & 1 & 1 & 1 \\ 1 & 1 & 0 & 1 \\ 1 & 1 & 1 & 0\end{psmatrix},
\begin{psmatrix}1 & 1 & 1 & 0 \\ 1 & 1 & 0 & 1 \\ 1 & 0 & 0 & 2 \\ 0 & 1 & 2 & 0\end{psmatrix},
\begin{psmatrix}1 & 1 & 1 & 0 \\ 1 & 1 & 0 & 1 \\ 0 & 1 & 1 & 1 \\ 1 & 0 & 1 & 1\end{psmatrix}^* \\ 
4 &
\begin{psmatrix}0 & 1 & 0 & 0 \\ 1 & 0 & 0 & 0 \\ 0 & 0 & 0 & 1 \\ 0 & 0 & 1 & 0\end{psmatrix}
&
\begin{psmatrix}1 & 0 & 1 & 0 \\ 0 & 1 & 0 & 1 \\ 0 & 1 & 1 & 0 \\ 1 & 0 & 0 & 1\end{psmatrix},
\begin{psmatrix}0 & 0 & 1 & 1 \\ 0 & 0 & 1 & 1 \\ 1 & 1 & 0 & 0 \\ 1 & 1 & 0 & 0\end{psmatrix},
\begin{psmatrix}1 & 0 & 1 & 0 \\ 0 & 1 & 0 & 1 \\ 0 & 1 & 0 & 1 \\ 1 & 0 & 1 & 0\end{psmatrix}^*,
\begin{psmatrix}0 & 0 & 2 & 0 \\ 0 & 0 & 0 & 2 \\ 1 & 1 & 0 & 0 \\ 1 & 1 & 0 & 0\end{psmatrix}^* \\
\hline
\end{array}
\]
Moreover, each matrix $M$ appears as the adjacency matrix of a twisted graded Calabi--Yau algebra satisfying the above, except possibly those indicated by $^*$.
\end{thm}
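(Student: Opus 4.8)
The plan is to fix the Nakayama permutation matrix $P$ to be a representative of each of the three prescribed conjugacy classes in $S_4$ — a four-cycle, a three-cycle with one fixed vertex, and a product of two disjoint transpositions — and, for each, to carry out a finite classification of the admissible adjacency matrices $M$ (and parameters $s$) and then a realization of each admissible $M$. Throughout I would use the numerical characterization of the type of a twisted graded Calabi-Yau algebra of dimension $3$ recalled in the next section (following \cite{GR1,RR1,RR2}): if $(M,P,s)$ is such a type, then $M$ commutes with $P$, the relations are homogeneous of degree $s-1$ with multiplicity matrix $PM^{\mathsf T}$, the polynomial matrix $q(t) := I - Mt + PM^{\mathsf T}t^{s-1} - Pt^{s}$ is the inverse of the matrix Hilbert series $H_A(t)$, and the Gelfand-Kirillov dimension being $3$ forces $\det q(t)$ to vanish to order exactly $3$ at $t = 1$.

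For the classification (the ``only if'' half), I would first pin down $s$: the Artin-Schelter dichotomy at the level of each vertex, together with positivity of $H_A(t)$ and the order-of-vanishing condition, forces $s \in \{3,4\}$, and moreover $s = 3$ exactly when every row and column sum of $M$ equals $3$ while $s = 4$ exactly when they all equal $2$; one then checks that $s = 4$ is incompatible with a four-cycle or a three-cycle, leaving the four cases appearing as rows of the table. In each case $P$ is fixed, $s$ is fixed, and the row and column sums of $M$ are fixed, so there are only finitely many non-negative integer matrices $M$ to consider; imposing $PM = MP$ cuts this down further, and then one discards every remaining $M$ for which $q(t)^{-1}$ has a negative coefficient, $\det q(t)$ has the wrong order of vanishing at $t = 1$, or the quiver of $M$ is disconnected. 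The matrices that survive are exactly those listed.

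For the realization (the ``Moreover'' half), I would anchor each pair $(P,s)$ with an explicit family and then spread out using operations that preserve the property of being twisted graded Calabi-Yau of dimension $3$ and Gelfand-Kirillov dimension $3$. The natural anchors are skew group algebras: for $s = 3$ one takes $A = R \rtimes G$ with $R$ a noetherian connected graded Artin-Schelter regular algebra of global dimension $3$ on three degree-one generators and $G$ a cyclic group acting so that $R_1$ decomposes into the relevant characters, so that the McKay matrix is the prescribed $M$ and the Nakayama automorphism acts on $A_0$ via the homological determinant of the $G$-action, producing $P$; for $s = 4$ one instead uses an $R$ generated in degree one by two elements with two cubic relations, with $G$ of order $4$. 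From such an anchor, one produces the remaining entries in the same row by graded twists: by the theorem proved earlier in the paper, a graded twist of a twisted graded Calabi-Yau algebra is again one of the same dimension and Gelfand-Kirillov dimension, with $M$ and $P$ altered in a controlled, computable way, so an appropriate choice of twisting automorphism moves one anchor onto another entry of the same row; Ore extensions of lower-vertex examples handle a few further cases. The entries marked ${}^*$ are precisely those for which none of these constructions is available, and their existence is left open.

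The main obstacle is the realization step rather than the classification: the latter is, once the $q(t)$-formalism is in place, a bounded and essentially mechanical search, whereas for each non-starred matrix one must exhibit an honest algebra and verify — via its Hilbert series, its quiver, and its Nakayama automorphism — that its type is exactly the claimed $(M,P,s)$. The delicate part is organizing the anchors together with the graded-twist and Ore-extension operations so that their combined reach covers every non-starred entry, which is also why the starred entries remain open. A secondary point requiring care in the classification is confirming that positivity of $H_A(t)$ together with the commutation and vanishing-order conditions really does exclude every matrix outside the table, with no infinite family slipping past the bound on the entries of $M$.
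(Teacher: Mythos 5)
Your overall architecture (fix $P$ in each conjugacy class, constrain $M$ via the matrix polynomial $p(t)=I-Mt+PM^{T}t^{s-1}-Pt^{s}$ and its cyclotomic factorization, then realize survivors by skew group rings, Ore extensions, and graded twists) matches the paper. However, there is a genuine gap in your classification step: the claimed dichotomy that $s=3$ forces every row and column sum of $M$ to equal $3$ (and $s=4$ to equal $2$) is false, and it is the load-bearing fact that makes your search finite. A counterexample sits in the theorem's own table: the three-cycle entry
$\begin{psmatrix}0 & 0 & 0 & 1 \\ 0 & 0 & 0 & 1 \\ 0 & 0 & 0 & 1 \\ 1 & 1 & 1 & 2\end{psmatrix}$
(the McKay matrix of $A_4$ acting on $\kk[x,y,z]$) has row sums $1,1,1,5$. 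What is true is only that when $M$ is normal its spectral radius equals $6-s$, and that the minimal row sum is bounded above by the spectral radius; row sums are constant only when all vertices correspond to irreducible representations of equal dimension. Consequently your proposed finiteness argument collapses, and your derivation that ``$s=4$ is incompatible with a four-cycle or a three-cycle'' has no proof — in the paper this is the \emph{output} of the finite search, not an input. The paper instead bounds entries case by case: for the four-cycle, circulance plus $\rho(M)=6-s$ gives $a+b+c+d=6-s$; for the three-cycle, the trace bound $3w+r\le 4s$ from comparing the $t$-coefficient of $\det p(t)$ with a generic product of cyclotomics, together with explicit eigenvalue formulas, bounds the entries; and for two two-cycles the matrices need not even be normal (the paper notes the two unresolved $s=4$ entries are not normal), so a separate comparison of the $t$ and $t^2$ coefficients ($\lambda$ and $\beta-\gamma$) against all cyclotomic products is used. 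You would need to replace your row-sum claim with arguments of this kind before the search is legitimately finite.

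A secondary inaccuracy: $\det p(t)$ is only known to vanish at $t=1$ to order \emph{at least} $3$; the paper uses $D(1)=0$ and $D''(1)=0$ as necessary conditions rather than asserting the order is exactly $3$. Your realization half is essentially the paper's (anchors from $\kk[x,y,z]\#\kk G$ and, for $s=4$, from a two-generator AS regular algebra of dimension $3$ with a quartic superpotential, spread out by Ore extensions and graded twists), and is fine in outline, though you leave unspecified which construction hits which matrix — which the paper treats as the genuinely laborious part.
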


The matrices in Theorem \ref{thm.main} appear largely in standard ways. In most cases, we construct a skew group ring $\kk[x,y,z]\#\kk G$ for some finite group $G$ acting by graded automorphisms on $\kk[x,y,z]$. In this paper, $G$ is typically the cyclic group of order four or it is the Klein-4 group. In some cases, it is necessary to replace $\kk[x,y,z]$ by a three-dimensional Artin--Schelter regular algebra.

Other matrices we are able to realize through Ore extensions of skew group rings $\kk[x,y]\#\kk G$. In \cite{GR1}, all matrices could be constructed either through skew group rings or Ore extensions except for one family of \emph{mutations} of McKay matrices. In this paper we observe that these techniques are not sufficient to realize all such matrices. It is necessary to also consider graded (Zhang) twists of twisted graded Calabi--Yau algebras,
which are again twisted graded Calabi--Yau as a consequence of the following result.

\begin{thm}[Theorem \ref{thm.AStwist}]
\label{thm.intro_twist}
If $A$ is generalized Artin--Schelter regular of dimension $\dimd$, then any graded twist of $A$ is also generalized Artin--Schelter regular of dimension $\dimd$.
\end{thm}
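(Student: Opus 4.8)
The plan is to realize a graded twist as an operation on the whole category of graded modules, and then verify that each clause in the definition of generalized Artin--Schelter regularity survives it. Write $B = A^{\tau}$ for the graded twist of $A$ by a twisting system $\tau$ (for a quiver algebra, by a graded automorphism). The input is Zhang's theorem, in the form valid for algebras graded over a semisimple base: $\tau$ induces an equivalence $F\colon \GrMod A \xrightarrow{\ \sim\ } \GrMod B$ that is the identity on underlying graded vector spaces and on morphisms, that carries the rank-one free module $A_A$ to $B_B$ and the semisimple module $A_0 = A/A_{\ge 1}$ to $B_0 = B/B_{\ge 1}$, and that restricts to an equivalence on finitely generated graded modules sending graded-simple modules to graded-simple modules. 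It follows at once that $B$ is again locally finite $\NN$-graded with $B_0\cong A_0$ finite-dimensional semisimple, and that $\GKdim B = \GKdim A < \infty$ because $A$ and $B$ have the same Hilbert series.

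For the global dimension, take a minimal graded free resolution $P_\bullet \to A_0$ over $A$, of length $d = \operatorname{gldim} A$. Applying $F$ yields a graded free resolution $F(P_\bullet) \to B_0$ over $B$ which is again minimal, since $F$ preserves the grading (hence carries the radical series of $P_{i-1}$ to that of $F(P_{i-1})$) and leaves the differentials unchanged. Thus $\operatorname{gldim} B = \operatorname{pd}_B(B_0) = d$; equivalently, global dimension is an invariant of the abelian category $\GrMod A$ together with its class of projectives, so it is preserved by any equivalence.

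It remains to transport the Gorenstein/Artin--Schelter condition, namely that the cohomology of $\Hom_A(P_\bullet, A)$ is concentrated in homological degree $d$ and is there isomorphic to $\mu(A_0)(-\ell)$ for some automorphism $\mu$ of $A_0$ and some Gorenstein parameter $\ell$. The key is that the right-module twist $F$ has a left-module counterpart $G\colon \GrMod A^{\op} \xrightarrow{\ \sim\ } \GrMod B^{\op}$, coming from the inverse twisting system and again the identity on graded vector spaces, and that the two are compatible in the sense that $\Hom_B(F(-),B)\cong G(\Hom_A(-,A))$ as functors on graded free modules (with the appropriate grading shifts). Feeding in the resolution, $\Hom_B(F(P_\bullet),B)\cong G(\Hom_A(P_\bullet,A))$ has cohomology only in degree $d$, equal to $G(\mu(A_0)(-\ell)) \cong \mu'(B_0)(-\ell)$, where $\mu'$ is obtained from $\mu$ and the degree-zero part of $\tau$. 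This is exactly the Artin--Schelter condition for $B$, with the same parameter $\ell$; running the symmetric argument with left and right interchanged yields the co-sided condition, so $B$ is generalized Artin--Schelter regular of dimension $d$.

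The main obstacle I expect is precisely the bookkeeping in the last paragraph: writing down the left-module twist $G$ explicitly, verifying the compatibility $\Hom_B(F(-),B)\cong G(\Hom_A(-,A))$ on graded free modules with the correct shifts, and pinning down the twisted Nakayama automorphism $\mu'$. A shortcut is available under the extra finiteness hypotheses (such as those of Reyes--Rogalski) under which generalized Artin--Schelter regularity coincides with the twisted Calabi--Yau property: a graded twist sends the invertible Nakayama bimodule of $A$ to an invertible bimodule over $B$, so the twisted Calabi--Yau property, and hence generalized Artin--Schelter regularity, is manifestly inherited. The module-category argument above has the advantage of not requiring such hypotheses.
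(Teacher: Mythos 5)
Your proposal follows essentially the same route as the paper's proof of Theorem \ref{thm.AStwist}: both invoke Zhang's equivalence $F\colon \GrMod A \to \GrMod B$ to transport a minimal projective resolution of the semisimple quotient (so graded global dimension is preserved) and then transport the complex $\Hom_A(P^\bullet,A)$ across the equivalence to conclude that $\Ext^i_B$ of the semisimple quotient is concentrated in degree $d$ with the required bimodule value. The only cosmetic differences are that the paper works with $S=A/J(A)$ rather than $A_0$, and it handles the $(S,S)$-bimodule structure of $\Ext^d$ by noting the right action is untwisted and citing \cite[Theorem 5.2(d)]{RR2}, where you instead introduce the left-twist functor $G$ --- the same bookkeeping in different notation.
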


This paper is organized as follows. In Section \ref{sec.background} we provide necessary background on derivation-quotient algebras and the twisted Calabi--Yau condition. In Section \ref{sec.examples} we demonstrate common ways to produce twisted Calabi--Yau algebras, focusing on methods necessary to produce examples who adjacency matrices appear in Theorem \ref{thm.main}. The bulk of work in this paper is in Section \ref{sec.quivers} where we compute the quivers that support twisted graded Calabi--Yau algebras on four vertices and prove Theorem \ref{thm.main}.

\subsection*{Acknowledgments}
Lamkin was partially supported by the Miami University Dean's Scholar program. The authors appreciate comments from Manuel Reyes and Daniel Rogalski that assisted in the proof of Theorem \ref{thm.intro_twist}. The authors also would like to thank the referee for suggestions that improved the exposition in the paper.

\section{Background}\label{sec.background}

Throughout, let $\kk$ denote an algebraically closed field of characteristic zero. All algebras in this paper are assumed to be $\kk$-algebras. We say an algebra $A$ is \emph{$\NN$-graded} (or just \emph{graded}) if there is a vector space decomposition $A = \bigoplus_{k \geq 0} A_k$ such that $A_k \cdot A_\ell \subset A_{k+\ell}$. {The symbol $\delta_{ij}$ denotes the Kronecker delta function.}

\subsection{Quivers and path algebras}

A \emph{quiver} $Q$ is a tuple $(Q_0,Q_1,\source,\target)$ consisting of a set of vertices $Q_0$, a set of edges $Q_1$, and functions $\source,\target:Q_1 \to Q_0$ that identify for each edge $a \in Q_1$ its \emph{source} $\source(a) \in Q_0$ and its \emph{target} $\target(a) \in Q_0$. The \emph{adjacency matrix} of $Q$, denoted $M_Q$, is the $n \times n$ matrix such that $(M_Q)_{ij}$ records the number of edges in $Q_1$ from vertex $i$ to vertex $j$.

A \emph{path} in $Q$ is a sequence $p=a_1a_2\cdots a_m$ with $a_i \in Q_1$ such that $\target(a_i)=\source(a_{i+1})$ for $i=1,\hdots,m-1$. Thus we may also extend $\source$ and $\target$ to paths by $\source(p)=\source(a_1)$ and $\target(p)=\target(a_m)$.
The quiver $Q$ is said to be \emph{connected} if it is connected as an undirected graph, and $Q$ is \emph{strongly connected} if for any two vertices $i$ and $j$, there exists a path $p$ with $\source(p)=i$ and $\target(p)=j$.

The \emph{path algebra} $\kk Q$ has $\kk$-basis consisting of paths in $Q$ and multiplication defined as concatenation, where the product $pq$ of two paths $p$ and $q$ is zero if $\target(p) \neq \source(q)$. For each vertex $v$, there is a \emph{trivial path} $e_v$ where $e_vp = p$ if $\source(p)=v$ and otherwise $e_vp=0$. Similarly, $pe_v=p$ if $\target(p)=v$ and otherwise $pe_v=0$. Note that if $Q_0$ is finite, then $1=\sum_{v \in Q_0} e_v$.

Let $Q_k$ denote the $\kk$-vector space of paths of length $k$ (along with $0$). Then $\kk Q = \bigoplus_{k \in \NN} Q_k$, so $\kk Q$ is graded where $(\kk Q)_0 \iso \kk^n$, $n=|Q_0|$. A nonzero element $q$ in $\kk Q$ is \emph{homogeneous} if $q \in Q_k$ for some $k$.

Let $\sigma$ be an automorphism of $\kk Q$. A homogeneous element $\omega\in Q_m$ is a \emph{$\sigma$-twisted superpotential} (of degree $\degs$) if it is invariant under the map $a_1a_2\cdots a_\degs \mapsto \sigma(a_\degs)a_1a_2\cdots a_{\degs-1}$. 

Given $a \in Q_1$, the derivation operator $\partial_a$ is defined on paths $p=a_1a_2\cdots a_m$ by $\partial_a(p) = a\inv p = a_2 \cdots a_m$ if $a=a_1$ and otherwise $\partial_a(p)=0$. Then $\partial_a$ extends linearly to all of $\kk Q$. In particular, given a homogeneous $\sigma$-twisted superpotential $\omega$ on $\kk Q$, the corresponding \emph{derivation-quotient algebra} is $\kk Q/(\partial_a(\omega) : a \in Q_1)$. Since $\omega$ is homogeneous, this algebra inherits the $\NN$-grading on $\kk Q$.

If $A=\kk Q/(\partial_a(\omega) : a \in Q_1)$ with $\omega$ homogeneous and $|Q_0|=n$, then we label the trivial paths in $Q$ as $e_1,\hdots,e_n$ so that $A_0 = \kk e_1 + \cdots + \kk e_n$. Note that $e_ie_j=\delta_{ij}$. We set
$(H_k)_{ij} = \dim_{\kk} e_i A_k e_j$. Then 
the \emph{(matrix valued) Hilbert series} of $A$ is the formal power series
\[ h_A(t) = \sum_{k=0}^\infty H_k t^k \in M_n(\kk)\llbracket t \rrbracket.\]

\subsection{Twisted graded Calabi--Yau algebras}

Given an algebra $A$, let $A^{\op}$ denote the opposite algebra and $A^e = A \tensor_\kk A^{\op}$ the \emph{enveloping algebra} of $A$. A $\kk$-central $(A,A)$-bimodule $M$ is a left $A^e$-module where the action is given by $(a \tensor b^{\op})\cdot m = amb$ for all $m \in M$. The algebra $A$ is said to be \emph{homologically smooth (over $\kk$)} if $A$ has a finite length resolution by finitely generated projective $A^e$-modules. 

\begin{definition}\label{defn.CY}
An algebra $A$ is said to be \emph{twisted Calabi--Yau} of dimension $\dimd$ if $A$ is homologically smooth and there exists an invertible $(A,A)$-bimodule $U$ such that there are right $A^e$-module isomorphisms $\Ext_{A^e}^i(A,A^e) \iso \delta_{i\dimd} U$.
\end{definition}

For a locally finite graded algebra $A$, we denote by $J(A)$ the graded Jacobson radical of $A$. Since $A_{\geq 1} \subset J(A)$, then $S=A/J(A)=A_0/J(A_0)$ is a finite-dimensional semisimple algebra. If, in addition, $A$ is twisted Calabi--Yau of dimension $\dimd$, then $A$ has graded global dimension $\dimd$ and there is a $\kk$-central invertible $(S,S)$-bimodule $V$ such that there are $(S,S)$-bimodule isomorphisms $\Ext_A^i(S,A) \iso \delta_{i\dimd} V$ \cite[Theorem 5.15]{RR2}. That is, $A$ is \emph{generalized Artin--Schelter regular}.

\begin{theorem}[{\cite[Theorem 6.8, Remark 6.9]{BSW}}]\label{thm.presentation}
Let $Q$ be a connected quiver and let $I$ be a homogeneous ideal in $\kk Q$.  If $A=\kk Q/I$ is twisted graded Calabi--Yau of dimension 3, then $A=\kk Q/ (\partial_a(\omega) : a \in Q_1)$ for some homogeneous $\sigma$-twisted superpotential $\omega$.
\end{theorem}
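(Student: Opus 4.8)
The plan is to recover the superpotential from the self-dual shape of the minimal bimodule resolution of $A$, following Bocklandt and Bocklandt--Schedler--Wemyss. Since $A$ is twisted graded Calabi--Yau of dimension $3$ it is homologically smooth, so it has a minimal graded projective resolution
\[
0 \to P_3 \to P_2 \to P_1 \to P_0 \to A \to 0
\]
of $\NN$-graded $A^e$-modules. Because $A = \kk Q/I$ with $Q$ connected and $I$ homogeneous, we have $S = A_0 = \kk Q_0$ and $A$ is generated over $S$ in degree $1$, so the low end of the resolution is forced: $P_0 = A\tensor_S A$, $P_1 = A\tensor_S V\tensor_S A$ with $V = \kk Q_1$ the bimodule of arrows, and $P_2 = A\tensor_S R\tensor_S A$ with $R\subseteq\kk Q$ a minimal space of relations; write $P_3 = A\tensor_S Z\tensor_S A$ and let $d_i\colon P_i\to P_{i-1}$ be the differentials.

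The engine of the proof is the self-duality coming from the Calabi--Yau condition. Applying $\Hom_{A^e}(-,A^e)$ to $P_\bullet$ and using $\RHom_{A^e}(A,A^e)\iso U[-3]$ with $U = {}^{1}\!A^{\mu}$ the invertible Nakayama bimodule ($A$ being graded, $\mu\in\Aut(A)$), one sees that the dual complex, suitably reindexed and shifted, is a minimal graded projective bimodule resolution of $U$. But the minimal bimodule resolution of $U = {}^{1}\!A^{\mu}$ is obtained from $P_\bullet$ by twisting the right action by $\mu$; by uniqueness of minimal resolutions this produces an isomorphism of complexes $\Phi$ between $\Hom_{A^e}(P_\bullet,A^e)$ and the $\mu$-twist of $P_{3-\bullet}$. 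Using $\Hom_{A^e}(A\tensor_S X\tensor_S A,A^e)\iso A\tensor_S X^{*}\tensor_S A$ for a finite-dimensional $S$-bimodule $X$, this forces several structural facts: $Z$ is one-dimensional at each vertex (this is also the generalized AS-regular condition $\Ext^i_A(S,A)\iso\delta_{i3}V$ with $V$ invertible over $S$), so $P_3$ is generated by elements $z_v$, one for each $v\in Q_0$; $R\iso V^{*}$ up to the twist, so $\#\{\text{relations}\} = \#Q_1$; and, tracking internal degrees through $\Phi$, all minimal relations lie in a single degree $N-1$, so each $z_v$ has internal degree $N$ and $\Phi$ is determined by its top component.

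With this structure, I would define $\omega$ explicitly: choosing $S$-bimodule splittings to lift to $\kk Q$, let $\omega\in\kk Q_N$ be the image of $\sum_{v\in Q_0}z_v$ under $P_3\xrightarrow{\,d_3\,}A\tensor_S R\tensor_S A\hookrightarrow\kk Q\tensor_S\kk Q\tensor_S\kk Q\xrightarrow{\,\mathrm{mult}\,}\kk Q$. It then remains to verify two things. First, $\omega$ is a $\sigma$-twisted superpotential for the automorphism $\sigma$ read off from $\mu$: this cyclic-up-to-$\sigma$ invariance is exactly the statement that $\Phi$ intertwines $d_3$ with the dual $d_1^{\vee}$, i.e. that the top of the resolution is self-dual up to $\sigma$. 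Second, $(\delta_a\omega : a\in Q_1) = (R) = I$: unwinding the definitions, the top differential $d_3$ is, up to the right-hand $\mu$-twist, given on $z_v$ by the $\delta_a\omega$, so $d_2 d_3 = 0$ shows $\delta_a\omega\in I$ for all $a$, while the non-degeneracy built into $\Phi$ forces the $\delta_a\omega$ to be linearly independent and hence (being $\#Q_1$ elements of degree $N-1$ inside the $\#Q_1$-dimensional space $R$) to span $R$. Combining the two gives $A = \kk Q/(\delta_a\omega : a\in Q_1)$.

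The main obstacle is this last step: converting the abstract chain-level self-duality $\Phi$ into an honest homogeneous element $\omega\in\kk Q$ and a genuine identity among paths. Concretely one must (a) make the identifications $P_1\leftrightarrow$ arrows and $P_2\leftrightarrow$ relations canonical enough that $\omega$ is well defined; (b) translate ``$\Phi$ is a morphism of complexes'' into the two assertions that $\omega$ is $\sigma$-twisted and that $\{\delta_a\omega\}$ generates $I$; and (c) carry out the degree bookkeeping that pins all minimal relations to a single degree and matches the Nakayama twist $\mu$ with the superpotential twist $\sigma$. (In arbitrary dimension $d$ one runs the same argument with iterated derivations $\delta_{a_1}\!\cdots\delta_{a_{d-2}}$; the case $d=3$ needed here is the one above.)
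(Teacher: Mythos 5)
The paper does not prove this statement itself; it is quoted directly from Bocklandt--Schedler--Wemyss \cite[Theorem 6.8, Remark 6.9]{BSW}. Your sketch --- extracting $\omega$ from the top of the minimal graded projective $A^e$-resolution via the self-duality $\RHom_{A^e}(A,A^e)\iso U[-3]$, with the Nakayama bimodule supplying the twist $\sigma$ and minimality/non-degeneracy forcing the $\delta_a\omega$ to span the relation space --- is essentially the argument of that cited source, so the approach matches.
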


For the remainder of this paper, we will work exclusively in the setting of the following hypothesis, which is essentially the same as that used in \cite{GR1}.

\begin{hypothesis}\label{hyp.main}
Let $A \iso \kk Q/(\partial_a(\omega) : a \in Q_1)$ be a derivation-quotient algebra where $Q$ is a strongly connected quiver and $\omega$ is a $\sigma$-twisted superpotential of degree $\degs$ for some automorphism $\sigma$ of $\kk Q$ with $s=3$ or $s=4$. Assume that $A$ is twisted graded Calabi--Yau of dimension 3 and Gelfand--Kirillov dimension $3$.
\end{hypothesis}

Note that the hypothesis that $Q$ is strongly connected is actually a consequence of the other conditions \cite[Lemma 8.5]{RR1}.

Suppose $A=\kk Q/(\partial_a(\omega) : a \in Q_1)$ satisfies Hypothesis \ref{hyp.main}. 
Let $U$ be the bimodule appearing in Definition \ref{defn.CY}. By \cite[Proposition 5.2]{RR1}, $U=A$ as a $\kk$-vector space and there is a graded automorphism $\mu$ of $A$, called the \emph{Nakayama automorphism of $A$}, such that the action on $U$ is given by $b \cdot a \cdot c = ba\mu(c)$. Since $\mu$ is necessarily a graded automorphism, it permutes the vertices of $Q$. Let $P$ denote the associated permutation matrix where $P_{ij}=\delta_{\mu(i),j}$. We say that $(M,P,\degs)$ is the \emph{type} of $A$.

A twisted graded Calabi--Yau algebra is connected ($A_0=\kk$) it and only if it is Artin--Schelter regular \cite[Lemma 1.2]{RRZ}. Hence, when $Q$ has just one vertex, the only types satisfying Hypothesis \ref{hyp.main} are $(3,I,3)$ and $(2,I,4)$ \cite{ASch}. The types in which $Q$ has two or three vertices are described in \cite{GR1}. Eventually we hope to classify all the types that satisfy Hypothesis \ref{hyp.main}. In this paper we will focus on the types in which $|Q_0|=4$ and $P$ corresponds to a four-cycle, a three-cycle, or two two-cycles.

\begin{theorem}[{\cite{RR1}}]\label{thm.mpoly}
Suppose $A \iso \kk Q/(\partial_a(\omega) : a \in Q_1)$ 
satisfies Hypothesis \ref{hyp.main} and let
$(M,P,\degs)$ be the type of $A$. Then the matrices $M$ and $P$ commute, and we have $h_A(t) = (p(t))\inv$, where
\begin{align}\label{eq.mpoly}
p(t)=I-Mt+PM^Tt^{\degs-1} - Pt^\degs.
\end{align}
Moreover, every zero of $\det p(t) \in \kk[t]$ is a root of unity and $\det p(t)$ vanishes at $t=1$ to order at least $3$.
\end{theorem}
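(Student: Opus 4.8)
The plan is to read everything off the minimal graded projective resolution of the semisimple module $S=A/J(A)$, whose shape is pinned down by the superpotential presentation of Theorem~\ref{thm.presentation} together with the twisted Calabi--Yau self-duality, and then to deduce the two arithmetic statements from the nonnegativity of the coefficients of $h_A(t)$. I begin with the commutativity of $M$ and $P$: since $A=\kk Q/I$ with $I$ homogeneous, $A_0=\kk Q_0\cong\kk^m$, so the graded automorphism $\mu$ restricts to an automorphism of $\kk^m$ and hence permutes its unique complete set of primitive orthogonal idempotents $e_1,\dots,e_m$; writing $\mu(e_i)=e_{\mu(i)}$ we have $P_{ij}=\delta_{\mu(i),j}$. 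As $\mu$ is graded it also restricts to a linear automorphism of $A_1$, and $\mu(e_iae_j)=e_{\mu(i)}\mu(a)e_{\mu(j)}$ shows $\mu$ maps the span of the arrows $i\to j$ onto the span of the arrows $\mu(i)\to\mu(j)$; since $M_{ij}=\dim_\kk e_iA_1e_j$, this gives $M_{ij}=M_{\mu(i)\mu(j)}$, i.e. $PMP^{-1}=M$, hence $MP=PM$ (and, transposing, $M^TP=PM^T$).

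Next, the identity $h_A(t)=p(t)^{-1}$. By Theorem~\ref{thm.presentation} write $A=\kk Q/(\delta_a\omega:a\in Q_1)$ for a twisted superpotential $\omega$ of degree $s$. The dimension-$3$ twisted Calabi--Yau condition forces the minimal $A^e$-projective resolution of $A$ to have length $3$, with terms $A\otimes_S W_i\otimes_S A$ for $(S,S)$-bimodules $W_0=S$ (degree $0$), $W_1=\kk Q_1$ (degree $1$), $W_2$ (degree $s-1$) and $W_3$ (degree $s$), where the self-duality of this complex --- its $A^e$-dual is the complex reversed and twisted by the Nakayama automorphism $\mu$ --- identifies $W_3$ with the $\mu$-twist of $S$ and $W_2$ with the $\mu$-twist of the $S$-bimodule dual of $W_1$. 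Passing to the matrix-valued Hilbert series $[W](t)_{ij}=\sum_k\dim_\kk(e_iW_ke_j)\,t^k$, this reads $[W_0]=I$, $[W_1]=Mt$, $[W_2]=PM^Tt^{s-1}$ and $[W_3]=Pt^s$, the permutation $P$ appearing exactly because of the $\mu$-twist. Tensoring the bimodule resolution on the right with $S$ over $A$ (each term is projective as a right $A$-module) gives the minimal graded free resolution of $S$ as a left $A$-module, with $i$-th term $A\otimes_S W_i$; since the Hilbert series of $A\otimes_S W$ is $h_A(t)[W](t)$ and that of $S$ is $I$, taking the alternating sum of Hilbert series yields $h_A(t)\,p(t)=I$, i.e. $h_A(t)=p(t)^{-1}$ with $p(t)=I-Mt+PM^Tt^{s-1}-Pt^s$.

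For the root-of-unity statement, set $q(t)=\det p(t)\in\mathbb{Z}[t]$. From $p(0)=I$ we get $q(0)=1$, and as the top-degree term of $p(t)$ is $-Pt^s$, $q$ has degree $ms$ with leading coefficient $\det(-P)=\pm1$; thus $\pm q$ is monic with constant term $\pm1$. Each entry of $h_A(t)=\mathrm{adj}(p(t))/q(t)$ is the Hilbert series of some $e_iAe_j$, hence has nonnegative integer coefficients and is a rational function whose denominator divides $q$. Since $A$ is twisted Calabi--Yau of positive dimension it is infinite-dimensional, so $q$ is nonconstant; and as $A$ has finite Gelfand--Kirillov dimension these power series have radius of convergence $\geq1$, so by Pringsheim's theorem every zero of $q$ has modulus $\geq1$. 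But the product of all zeros of $q$ is $\pm q(0)$ divided by its leading coefficient, hence of modulus $1$, so every zero has modulus exactly $1$; Kronecker's theorem applied to $\pm q$ then shows every zero of $q$ is a root of unity.

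Finally, for the vanishing to order $\geq3$ at $t=1$: it is known that $\GKdim A=3$ (generalized AS-regular algebras of global dimension $3$ have GK-dimension $3$), and for a finitely generated graded algebra with rational Hilbert series the order of the pole of the Hilbert series at $t=1$ equals its Gelfand--Kirillov dimension. Applying this to the total Hilbert series $H_A(t)=\sum_k(\dim_\kk A_k)\,t^k=\mathbf{1}^T\mathrm{adj}(p(t))\mathbf{1}/q(t)$, whose coefficients are nonnegative and whose numerator lies in $\mathbb{Z}[t]$, forces $q(t)=\det p(t)$ to vanish at $t=1$ to order at least $3$. I expect the main obstacle to be the identification of $W_2$ and $W_3$ in the second paragraph --- carefully tracking the Nakayama automorphism through the self-duality of the minimal bimodule resolution is precisely what introduces the factors of $P$ into $p(t)$, and it is easy to get the twist wrong; the last step is sensitive in a different way, since it imports the facts $\GKdim A=3$ and ``GK-dimension $=$ order of the pole at $t=1$'' rather than extracting them from the resolution directly.
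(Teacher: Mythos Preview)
This theorem is not proved in the paper; it is quoted from \cite{RR1}. Your arguments for $MP=PM$ and for $h_A(t)=p(t)^{-1}$ are correct and are essentially how the proof in \cite{RR1} goes (the shape of the minimal bimodule resolution and its self-duality under $\mu$ are established there and in \cite{BSW}).

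The ``moreover'' clause is where your sketch breaks down. For the root-of-unity claim you need the entries of $h_A(t)$ to have radius of convergence at least $1$, which you justify by saying that $A$ has finite Gelfand--Kirillov dimension; but that is not among the stated hypotheses, and in \cite{RR1} the condition that all zeros of $\det p(t)$ lie on the unit circle is proved \emph{equivalent} to finite GK-dimension, so invoking it here is circular. (There is also a subtler issue: Pringsheim only tells you that the nearest singularity of each entry of $h_A$ lies on the positive real axis; zeros of $\det p$ could in principle cancel against the entries of $\mathrm{adj}\,p$, and ruling this out uses the connectedness of $Q$ together with a Perron--Frobenius argument.) The same problem recurs in your last paragraph: the assertion that generalized AS-regular algebras of global dimension $3$ have $\GKdim A = 3$ is not a general theorem---indeed Hypothesis~\ref{hyp.main} in this very paper imposes $\GKdim A = 3$ as an independent assumption---so your derivation of the order-$3$ vanishing at $t=1$ is not justified. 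The theorem as written here is a compressed summary of several results from \cite{RR1}, and the polynomial-growth hypothesis your argument needs for the last two conclusions is present there but has been elided in this statement.
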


As a consequence of Theorem \ref{thm.mpoly}, $\det(p(t))$ is a product of cyclotomic polynomials. The next lemma was used implicitly in \cite{GR1}. Here we state it more generally.

\begin{lemma}\label{lem.palindrome}
Suppose $A \iso \kk Q/(\partial_a(\omega) : a \in Q_1)$ satisfies Hypothesis \ref{hyp.main} and let $(M,P,\degs)$ be the type of $A$. Let $n=|Q_0|$ and let $p(t)$ be the associated matrix polynomial as in \eqref{eq.mpoly}. Then $\det(p(t))$ is palindromic if $(-1)^n\det(P)=1$ and antipalindromic otherwise.
\end{lemma}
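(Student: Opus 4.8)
The plan is to produce an explicit matrix identity expressing $p(t)$ in terms of a ``reciprocal'' transform of itself, and then pass to determinants. Set $\bar p(t) := t^s\, p(1/t) = t^s I - M t^{s-1} + P M^T t - P$, which again has polynomial entries in $t$. First I would transpose: since $P$ is a permutation matrix, $P^T = P\inv$ and $(PM^T)^T = M P\inv$, so
\[
\bar p(t)^T = t^s I - M^T t^{s-1} + M P\inv\, t - P\inv .
\]
Multiplying on the left by $-P$ and using that $M$ and $P$ commute (Theorem \ref{thm.mpoly}), so that $P M P\inv = M$, one gets
\[
-P\,\bar p(t)^T = I - M t + P M^T t^{s-1} - P t^s = p(t).
\]
This identity is the crux; everything after it is bookkeeping.

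Taking determinants of $p(t) = -P\,\bar p(t)^T$ and using $\det(-P) = (-1)^n \det(P)$, $\det(\bar p(t)^T) = \det(\bar p(t))$, and $\det(\bar p(t)) = \det\bigl(t^s\, p(1/t)\bigr) = t^{sn}\det\bigl(p(1/t)\bigr)$, I obtain
\[
\det p(t) = (-1)^n \det(P)\, t^{sn}\, \det p(1/t).
\]
Next I would verify that $sn$ is the degree of $\det p(t)$: every entry of $p(t)$ has degree at most $s$, and (since $s\ge 2$) the coefficient of $t^s$ in $p(t)$ is the invertible matrix $-P$, so the coefficient of $t^{sn}$ in $\det p(t)$ is $\det(-P) \neq 0$; thus $d := \deg \det p(t) = sn$, while the constant term $\det p(0) = \det I = 1$ is also nonzero. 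Hence $t^{sn}\det p(1/t) = t^{d}\det p(1/t)$ is precisely the reciprocal polynomial of $\det p(t)$, i.e.\ the polynomial whose coefficient sequence is that of $\det p(t)$ read in reverse.

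Finally, since $P$ is a permutation matrix, $\det(P) \in \{1,-1\}$, so $\varepsilon := (-1)^n\det(P) \in \{1,-1\}$, and the displayed identity reads $\det p(t) = \varepsilon\cdot t^{d}\det p(1/t)$. If $\varepsilon = 1$ this says $\det p(t)$ equals its own reciprocal, i.e.\ $\det p(t)$ is palindromic; if $\varepsilon = -1$ it equals minus its reciprocal, i.e.\ $\det p(t)$ is antipalindromic. The only step needing real care is the matrix identity $p(t) = -P\,\bar p(t)^T$ --- one must transpose the correct factor and invoke $MP = PM$ at exactly the right moment to collapse $PMP\inv$ to $M$; the degree count and the determinant manipulations are then routine.
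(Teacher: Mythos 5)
Your proposal is correct and follows essentially the same route as the paper: the key identity $t^s p(t^{-1}) = -P\,p(t)^T$ (you write it equivalently as $p(t) = -P\,\bar p(t)^T$), using $P^T = P^{-1}$ and $MP = PM$, followed by taking determinants. Your extra verification that $\det p(t)$ has degree exactly $sn$ and nonzero constant term is a small rigor-adding touch the paper leaves implicit.
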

\begin{proof}
Since $P$ is orthogonal,
\[ t^\degs p(t\inv) = It^\degs - Mt^{\degs-1} + PM^Tt - P= -P (I - M^Tt + P\inv Mt^{\degs-1} - P\inv t^\degs) = -P p(t)^T.\]
Consequently,
\[ t^{n\degs} \det(p(t\inv)) = \det(t^\degs p(t\inv)) = \det(-P p(t)^T) = (-1)^n \det(P) \det(p(t)),\]
and the result follows.
\end{proof}

\section{Examples of Calabi--Yau algebras}\label{sec.examples}

\subsection{McKay quivers}

Let $G$ be a finite group and $V$ a finite-dimensional representation of $G$ (over the field $\kk$). Let $W_1,\hdots,W_n$ be the irreducible representations of $G$. The \emph{McKay quiver} associated to the pair $(G,V)$ is the quiver with $n$ vertices, corresponding to the $n$ irreducible representations of $G$, and the number of arrows from vertex $i$ to vertex $j$ is the number of copies of $W_i$ appearing in the direct sum decomposition of $V \tensor W_j$. Equivalently, if $M$ is the adjacency matrix for the McKay quiver, then $M_{ij} = \dim_{\kk} \Hom_{\kk G}(W_i,V \tensor W_j)$.

Let $R=\bigoplus_{k \geq 0} R_k$ be a connected graded algebra that is generated in degree 1. Let $G$ be a group that acts on $R$ as graded automorphisms, so that $R_1$ is a $G$-representation. The skew group ring $R\#\kk G$ is, as a vector space, $R\tensor \kk G$, and multiplication is defined by
\[ (r \tensor g)(s \tensor h) = r g(s) \tensor gh\]
for all $r \tensor g, s \tensor h \in R \tensor G$, extended linearly. 

If $R$ is twisted graded Calabi--Yau of dimension $\dimd$, then $A=R\#\kk G$ is also twisted graded Calabi--Yau of dimension $\dimd$. One may choose a full idempotent $e \in A_0$ such that $B=e(R\#\kk G)e$ is a graded elementary algebra with $B_0 = \kk^m$. Moreover, $B$ is Morita equivalent to $A$, whence it is twisted graded Calabi--Yau \cite[Theorem 4.3, Lemma 6.2]{RR1}. It then follows that $B \iso \kk Q/I$ where $Q$ is the McKay quiver of the action of $G$ on $R_1$ and $I \subset \kk Q_{\geq 2}$ is a homogeneous ideal \cite[Lemma 3.4]{RR1}.
Putting this together gives that $A$ is Morita equivalent to a derivation-quotient algebra on $Q$.

The action of the Nakayama automorphism $\mu_A$ on $A_0 \iso \kk G$ is given by $\mu_A(g)=\hdet(g) g$, where $\hdet$ is the \emph{homological determinant} of the action of $G$ on $R$ \cite[Theorem 4.1]{RRZ}. In general, the homological determinant, as defined by J\"{o}rgensen and Zhang \cite{gourmet}, is difficult to compute. However, if $R$ is a (commutative) polynomial ring, then we may identity $g \in G$ with a linear transformation in $\GL_n(\kk)$ and in this case $\hdet(g)=\det(g)$. More generally, if $R$ is itself a derivation-quotient algebra with twisted superpotential $\omega$, then $g(\omega) = \hdet(g)\omega$ \cite[Theorem 3.3]{MS}. The \emph{winding automorphism} $\Xi^l_{\hdet}$ of $\kk G$ scales group elements by the homological determinant, so $\Xi^l_{\hdet}(g)=\hdet(g)g$.

One may decompose $1 \in \kk G$ as a sum of central idempotents $1=e_1 + \cdots + e_m$. Let $\chi_i$ be the character corresponding to the representation $W_i$. One has
\begin{align}\label{eq.ideps}
e_i = \frac{\dim_{\kk}(W_i)}{|G|} \sum_{g \in G} \chi_i(g\inv) g.
\end{align}
Thus, $\Xi^l_{\hdet}$ permutes the idempotents and this in turn determines a permutation of the vertices of $Q$.

\begin{example}\label{ex.mckay2}
Consider the quiver with adjacency matrix
\[ M = \begin{psmatrix}0 & 1 & 1 & 1 \\ 1 & 0 & 1 & 1 \\ 1 & 1 & 0 & 1 \\ 1 & 1 & 1 & 0\end{psmatrix}.\]
Let $G=C_2 \times C_2$, which has character table
\[
\begin{array}{c|cccc}
        & 1 & g & h & gh \\ \hline
\chi_1  & 1 & 1 & 1 & 1 \\
\chi_2  & 1 & 1 & -1& -1 \\
\chi_3  & 1 & -1& 1 & -1 \\
\chi_4  & 1 & -1&-1 & 1
\end{array}\]
and let $W_i$ be the irreducible (one-dimensional) representation with character $\chi_i$, $i=1,2,3,4$. Set $V=W_2\oplus W_3\oplus W_4$ and let $\psi$ be the character of $V$. The direct sum decomposition of $V \tensor W_j$ corresponds directly to the character decomposition of $\psi \cdot \chi_j$. Using this we see that $V \tensor W_j = \bigoplus_{i \neq j} W_i$ and this determines the columns of the McKay quiver associated to this action. It follows that the McKay quiver of $V$ has adjacency matrix $M$.

Let $R=\kk[x,y,z]$. Then the representation $V$ corresponds to the action of $G=C_2 \times C_2 = \grp{g,h}$ on $R_1=\kk\{x,y,z\}$ by
\[ g \mapsto \diag(-1,1,-1), \quad h \mapsto \diag(1,-1,-1).\]
By \eqref{eq.ideps}, the central idempotents of $R\#G$ are
\begin{align*}
e_1 &= \frac{1}{4}\left(1+g+h+gh\right), &
e_2 &= \frac{1}{4}\left(1+g-h-gh\right), \\
e_3 &= \frac{1}{4}\left(1-g+h-gh\right), &
e_4 &= \frac{1}{4}\left(1-g-h+gh\right).
\end{align*}
Since $\deg(g)=1$ and $\det(h)=1$, then the winding automorphism fixes the vertices. Thus the permutation matrix corresponding to the Nakayama automorphism on $R\#G$ is the identity.
\end{example}

\subsection{Ore extensions}

Let $R$ be an algebra, let $\rho \in \Aut_\kk(R)$, and let $\delta$ be a $\rho$-derivation of $R$, so 
\[ \delta(rr')=\rho(r)\delta(r') + \delta(r)r' \quad\text{for all $r,r' \in R$}.\] 
The \emph{Ore extension} $A=R[t;\rho,\delta]$ is the algebra generated over $R$ by $t$ with relations $tr=\rho(r)t + \delta(r)$ for all $r \in R$. If $R$ is twisted graded Calabi--Yau of dimension $\dimd$, then by \cite[Theorem 3.3]{LWW2}, $A$ is twisted graded Calabi--Yau of dimension $\dimd+1$. Moreover, the Nakayama automorphism of $A$ is related to that of $R$. We refer to \cite{GR1,LWW2} for more details and remark here only the computational tools necessary for our analysis.

Suppose $R$ is twisted graded Calabi--Yau of type $(M,P,2)$. Assume that $\rho$ is a graded automorphism of $R$ and $\delta$ is a graded $\sigma$-derivation. Hence, $\rho$ permutes the idempotents of $R$, so by an abuse of notation we write $\rho(e_i)=e_{\rho(i)}$. Let $P'$ be the matrix corresponding to the action of $\sigma$ on $R_0$, so $(P')_{ij} = \delta_{\rho(i),j}$. Now it follows that $A$ is twisted graded Calabi--Yau of type $(M+(P')\inv, P(P')\inv, 3)$.

\begin{example}\label{ex.ore}
Consider the action of $C_4=\grp{g}$ on $\kk[x,y]$ given by $\diag(1,i)$. Then $B=\kk[x,y]\# \kk C_4$ is twisted graded Calabi--Yau of dimension 2. So, $B$ has type $(M,P,2)$ where
\[  M = \begin{psmatrix}1 & 0 & 0 & 1 \\ 0 & 1 & 1 & 0  \\ 1 & 0 & 1 & 0 \\  0 & 1 & 0 & 1\end{psmatrix}, \quad
P = \begin{psmatrix}0 & 0 & 1 & 0 \\ 0 & 0 & 0 & 1 \\ 0 & 1 & 0 & 0 \\ 1 & 0 & 0 & 0\end{psmatrix}.\] 
We give two examples of Ore extensions of $B$.

(1) Let $\rho$ be the automorphism of $B$ induced by the automorphism of $\kk C_4$ determined by $g \mapsto -g$. Consequently, $\rho$ interchanges the idempotents $e_1 \leftrightarrow e_2$ and $e_3 \leftrightarrow e_4$. The matrix associated to $\rho$ is
\[P' = \begin{psmatrix}0 & 1 & 0 & 0 \\ 1 & 0 & 0 & 0 \\ 0 & 0 & 0 & 1 \\ 0 & 0 & 1 & 0\end{psmatrix}.\]
Consequently, $A=B[t;\rho]$ is twisted graded Calabi--Yau of dimension 3 and $A$ has type $(M + (P')\inv, P(P')\inv, 3)$. Explicitly, 
\[ M + (P')\inv=\begin{psmatrix}1 & 1 & 0 & 1 \\ 1 & 1 & 1 & 0 \\ 0 & 1 & 1 & 1 \\ 1 & 0 & 1 & 1\end{psmatrix}, \qquad
P(P')\inv = \begin{psmatrix}0 & 0 & 0 & 1 \\ 0 & 0 & 1 & 0 \\ 1 & 0 & 0 & 0 \\ 0 & 1 & 0 & 0\end{psmatrix}.\]

(2) Let $\rho$ be the automorphism of $B$ induced by the automorphism of $\kk C_4$ determined by $g \mapsto g^3$. Hence, $\rho$ fixes $e_1$ and $e_2$ but swaps $e_3$ and $e_4$. The matrix associated to $\sigma$ is
\[P' = \begin{psmatrix}1 & 0 & 0 & 0 \\ 0 & 1 & 0 & 0 \\ 0 & 0 & 0 & 1 \\ 0 & 0 & 1 & 0\end{psmatrix}.\]
Consequently, $A=B[t;\rho]$ is twisted graded Calabi Yau of dimension 3 and $A$ has type $(M + (P')\inv, P(P')\inv, 3)$ where
\[ M + (P')\inv=\begin{psmatrix}2 & 0 & 0 & 1 \\ 0 & 2 & 1 & 0  \\ 1 & 0 & 1 & 1 \\  0 & 1 & 1 & 1\end{psmatrix}, \qquad
P(P')\inv = \begin{psmatrix}0 & 0 & 0 & 1 \\ 0 & 0 & 1 & 0 \\ 0 & 1 & 0 & 0 \\ 1 & 0 & 0 & 0\end{psmatrix}.\]
\end{example}

\subsection{Graded (``Zhang'') twists}\label{sec.twist}

Let $A$ be an locally finite graded algebra. Let $\tau$ be a graded automorphism of $A$. The graded (left) twist of $A$, denoted $B={}_\tau A$, has the same $\kk$-basis as $A$ but a new multiplication $\star$ such that for $x \in B_m = A_m$ and $y \in B_n=A_n$,
\[ x \star y = \tau^n(x)y\]
where the multiplication on the right takes place in $A$. Such a twist appears in the work of Artin, Tate, and Van den Bergh \cite{ATV2} but was generalized and studied in greater detail in the work of Zhang \cite{ztwist}. Hence, such graded twists are often called \emph{Zhang twists}. We first establish that such twists preserve the twisted graded Calabi--Yau property in our setting, then provide an example which will be useful in the sequel.

Let $M$ be a graded (left) $A$-module. That is, $M=\bigoplus_{k \in \NN} M_k$ and $A_n M_k \subset M_{n+k}$. Now if $\tau$ is a graded automorphism of $A$ and $B={}_\tau A$ as above, then $M$ has a natural structure as a $B$-module, which we denote by ${}_\tau M$. As an abelian group, $M={}_\tau M$. Let $a \in A_n$ and $m \in M_k$, then the action is given by $a \star m = \tau^k(a) m$.  

The objects in the category $\GrMod A$ are graded $A$-modules and the maps are degree zero module homomorphisms. In \cite{ztwist}, Zhang considers the functor  $F:\GrMod A \to \GrMod B$ which, at the level of abelian groups, is just the identity. That is, $F(M)={}_\tau M$ and given a degree zero $A$-module homomorphism $\psi:M \to N$, $F(\psi) = \psi$. This follows since for $a \in A_n$ and $m \in M_k$ we have $\tau^k(a) \in B_n$ and $m \in ({}_\tau M)_k$, so
\[ \psi(a \star m) = \psi( \tau^k(a) m) = \tau^k(a) \psi(m) = a \star \psi(m).\]
By \cite[Theorem 3.1]{ztwist}, the functor $F:\GrMod A \to \GrMod B$ is an equivalence of categories. That the twisted graded Calabi--Yau property is invariant under twisting is a consequence of the following more general result. 

\begin{theorem}\label{thm.AStwist}
Let $A$ be a locally finite graded algebra which is generalized Artin--Schelter regular of dimension $\dimd$. Let $\tau$ be a graded automorphism of $A$. Then ${}_\tau A$ is generalized Artin--Schelter regular of the same global dimension.
\end{theorem}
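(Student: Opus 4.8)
The plan is to exploit the equivalence of categories $F \colon \GrMod A \to \GrMod {}_\tau A$ recalled above, and to track the homological data in Definition~\ref{defn.genAS} through this equivalence. First I would note that $A$ and $B = {}_\tau A$ share the same underlying graded vector space, hence $A_0 = B_0$ as algebras in degree zero (the twist only alters products that raise degree, and $\tau$ restricts to an automorphism of $A_0$, so in fact the product on degree-zero elements is unchanged); consequently $J(A) = J(B)$ and $S = A/J(A) = B/J(B)$ is the same finite-dimensional semisimple algebra. In particular $F(S) = {}_\tau S$ makes sense as a graded $B$-module, and since $S$ is concentrated in degree zero the twisted action $a \star m = \tau^0(a)m = am$ agrees with the original one, so $F(S) \iso S$ as $B$-modules.

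Next I would show $\gr\gldim B = \gr\gldim A$. Since $F$ is an equivalence of abelian categories $\GrMod A \simeq \GrMod B$ (by \cite[Theorem 3.1]{ztwist}), it preserves projectives, injectives, and exactness, hence carries a minimal graded projective resolution of $S$ over $A$ to one of $F(S) \iso S$ over $B$; the same works with $A$ itself in place of $S$, and for locally finite graded algebras the graded global dimension is computed by such resolutions, so the two global dimensions coincide and equal $d$. The content is then to compare $\Ext$-groups: because $F$ is an exact equivalence, for any graded $A$-modules $N, N'$ there are natural isomorphisms $\Ext^i_{\GrMod A}(N,N') \iso \Ext^i_{\GrMod B}(F(N),F(N'))$, and these are compatible with the internal grading shift (one checks $F$ commutes with the shift functor, again because the degree-zero structure is untouched). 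Applying this with $N = S$ and $N' = A$, and using $F(A) = B$ and $F(S) = S$, gives graded vector-space isomorphisms $\Ext^i_A(S,A) \iso \Ext^i_B(S,B)$, which vanish for $i \neq d$ and are isomorphic to $V$ for $i = d$ on the level of graded vector spaces over $S$ on the left.

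The subtle point — and the step I expect to be the main obstacle — is promoting these to $(S,S)$-bimodule isomorphisms. The right $S$-module structure on $\Ext^d_A(S,A)$ comes from the right multiplication of $A$ on itself; under the twist this right action is deformed, since right multiplication by $A_0$ is unchanged but the bimodule structure over all of $A$ that eventually cuts down to the $S$-action must be traced carefully. I would handle this by working with the bar-type or minimal-resolution model: take the minimal graded projective resolution $P_\bullet \to S$ of $S$ over $A$, apply $\Hom_A(-,A)$, and observe that $F(P_\bullet) \to S$ is the minimal resolution over $B$ while $\Hom_A(P_i, A) \iso \Hom_B(F(P_i), B)$ naturally (each $P_i$ is a direct sum of shifts of the indecomposable projectives $e_j A$, and $F(e_j A) = e_j B$, with $\Hom_A(e_j A, A) \iso e_j A$ matching $\Hom_B(e_j B, B) \iso e_j B$ as the same graded space); the left $S = B_0$-action on $e_j B$ via left multiplication in $B$ agrees with that in $A$ in degree zero, and the right $S$-action, which a priori could differ, is pinned down by naturality of the comparison maps together with the fact that $\tau$ acts trivially up to inner adjustment on the relevant lowest-degree pieces. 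Once the cohomology $\Ext^d_A(S,A) \iso V$ is identified with $\Ext^d_B(S,B)$ as $(S,S)$-bimodules, Definition~\ref{defn.genAS} is satisfied for $B$ with the same $d$ and the same $V$ (possibly twisted by an inner automorphism of $S$, which does not affect invertibility or $\kk$-centrality), completing the proof.
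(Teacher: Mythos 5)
Your proposal is correct and follows essentially the same route as the paper: both transport a minimal graded projective resolution of $S$ through Zhang's equivalence $F\colon \GrMod A \to \GrMod {}_\tau A$, use that $F$ is the identity on underlying maps to identify the complexes $\Hom_A(P_\bullet,A)$ and $\Hom_B({}_\tau P_\bullet,B)$, and observe that the right action by degree-zero elements is untouched by the twist. The only real difference is at the final bimodule step, where the paper closes the argument by citing \cite[Theorem 5.2(d)]{RR2} to reduce everything to the right $S$-module structure, whereas you argue via the decomposition of each $P_i$ into shifts of the $e_jA$.
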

\begin{proof}
Set $B={}_\tau A$. Let $I$ be a maximal homogeneous left ideal of $A$. It is clear by the definition of a twist that ${}_\tau I$ is a homogeneous left ideal of $B$. Moreover, since twisting preserves module inclusion, then ${}_\tau I$ is maximal in $B$. Since $A={}_{\tau\inv} B$, then twisting provides a one-to-one correspondence between maximal left ideals of $A$ and $B$. Since $J(A)$ is the intersection of all homogeneous left $A$-modules, it follows that $J(B)=J({}_\tau A) = {}_\tau J(A)$. Then if $S=A/J(A)$ we have ${}_\tau S = {}_\tau (A/J(A)) = {}_\tau A/{}_\tau J(A) = B/J(B)$. In fact, because $S=A_0/J(A_0)$, then it is clear that ${}_\tau S = S$.

The graded global dimension of $A$ is equal to the projective dimension of $S$ as a (left) $A$-module \cite[Corollary 4.14]{RR2}. Consequently, $S$ has a minimal resolution by graded projectives $P_\bullet \to S \to 0$. The functor $F:\GrMod A \to \GrMod B$ is an equivalence of graded module categories and so it preserves this resolution. That is, there is a resolution $({}_\tau P)_\bullet \to {}_\tau S \to 0$ of $B$-modules. Using the equivalence establishes that this resolution is minimal. 

Again let $P_\bullet \to S$ be a projective resolution of $S$. Then the equivalence $F$ gives an equivalence of cochain complexes
\begin{align*}
\xymatrix{
0 \ar[r]  & \Hom_A(P_0,A) \ar[r]^{d_0} \ar[d]^F & \Hom_A(P_1,A) \ar[r] \ar[d]^F & \cdots \\
0 \ar[r] & \Hom_B({}_\tau (P_0),B) \ar[r]^{F(d_0)} & \Hom_B({}_\tau (P_1),B) \ar[r] & \cdots
}
\end{align*}
To see that this is indeed an equivalence of cochain complexes, take $g \in \Hom_A(P_0,A)$. Then on one hand we have $F(d_0 \circ g) \in \Hom_B({}_\tau (P_1),B)$ and on the other hand we have $F(d_0) \circ F(g) \in \Hom_B({}_\tau (P_1),B)$. But since $F$ is just the identity on maps, then $F(d_0) \circ F(g) = F(d_0 \circ g)$.

Now taking cohomology gives an isomorphism $\Ext_A^i(S,A) \iso \Ext_B^i({}_\tau S,B)$. By the generalized Artin--Schelter Gorenstein condition, $\Ext_A^i(S,A) = 0$ for all $i \neq d$. On the other hand, $\Ext_A^d(S,A)$ has the structure of a right $A$-module induced from the $A$-module structure on $\Hom(-,A)$. In particular, if $f \in \Hom(M,A)$ then $f \cdot a$ is given by $(f \cdot a)(x)= f(x)a$ for all $x \in M$. By the Artin--Schelter Gorenstein condition, $\Ext_A^d(S,A)$ is a right $S$-module. That is, $\Ext_A^d(S,A) \cdot J(A) = 0$. But the right action is untwisted and so this holds also for $\Ext_B^i({}_\tau S,B)$. More precisely, by \cite[Theorem 5.2(d)]{RR2}, there exists an invertible graded $(S,S)$-bimodule $V$ such that $\Ext_A^d(S,A) \iso V$ as right $S$-modules. It follows that the condition holds as well for $\Ext_B^i({}_\tau S,B)$.
\end{proof}

\begin{corollary}\label{cor.twist}
Let $A$ be a twisted graded Calabi--Yau algebra which is generated in degree one. Let $\tau$ be a graded automorphism of $A$. Then ${}_\tau A$ is twisted graded Calabi--Yau algebra of the same global dimension.
\end{corollary}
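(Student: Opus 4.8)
The plan is to reduce Corollary \ref{cor.twist} to Theorem \ref{thm.AStwist} by passing through the dictionary, recalled in Section \ref{sec.background}, between the twisted graded Calabi--Yau property and generalized Artin--Schelter regularity. Since $A$ is generated in degree one and is twisted graded Calabi--Yau, it is in particular homologically smooth (Definition \ref{defn.smooth}), hence of finite graded global dimension, and its degree-zero component $A_0 \iso A/A_{\geq 1}$ is finite-dimensional semisimple; so $A$ is a locally finite graded algebra to which the equivalence ``twisted graded Calabi--Yau of dimension $d$ $\iff$ generalized AS regular of dimension $d$'' of \cite[Theorem 5.15]{RR2} applies, using that $\kk$ is algebraically closed. (If one wants to invoke that theorem literally in the $\kk Q/I$ form, one first replaces $A$ by a Morita-equivalent elementary algebra, which preserves the twisted graded Calabi--Yau property.) Thus $A$ is generalized AS regular of dimension $d$, where $d$ is its graded global dimension.

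The next step is to observe that $B = {}_\tau A$ is again generated in degree one and that $B_0 = A_0$ as an algebra: because $\tau$ is a graded automorphism, $\tau^k(A_1) = A_1$ for every $k$, so the twisted products $a_1 \star \cdots \star a_m$ of degree-one elements span the same graded piece as the ordinary products $a_1 \cdots a_m$, and the twisted and untwisted multiplications agree in degree zero. Hence $B$ lies in the same class of locally finite graded algebras as $A$, and once Theorem \ref{thm.AStwist} tells us that $B$ is generalized AS regular of the same global dimension $d$, a second application of \cite[Theorem 5.15]{RR2} to $B$ yields that $B$ is twisted graded Calabi--Yau of dimension $d$. The equality of dimensions is automatic, since the common value is the projective dimension of $S = A/J(A)$, which is transported intact by the category equivalence $F$ exploited in the proof of Theorem \ref{thm.AStwist}. (Alternatively, one could argue more directly that homological smoothness is preserved by twisting the finite $A^e$-module resolution of $A$, but routing through generalized AS regularity keeps the bookkeeping minimal.)

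The main obstacle is not the homological algebra, which is already packaged in Theorem \ref{thm.AStwist}, but making sure that both $A$ and ${}_\tau A$ genuinely fall within the hypotheses of the equivalence with generalized AS regularity --- that is, that they are locally finite graded with finite-dimensional semisimple degree-zero part over the algebraically closed field $\kk$, and, if one insists on the $\kk Q/I$ formulation, that graded twisting is compatible with the passage to a Morita-equivalent elementary algebra. For the algebras that actually occur in this paper (skew group rings, Ore extensions of skew group rings, and their twists) these verifications are routine, so the corollary follows.
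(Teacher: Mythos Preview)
Your proposal is correct and follows exactly the approach the paper intends: the corollary is stated without proof immediately after Theorem \ref{thm.AStwist}, and the implicit argument is precisely the one you give---apply the equivalence between twisted graded Calabi--Yau and generalized AS regularity from \cite[Theorem 5.15]{RR2}, invoke Theorem \ref{thm.AStwist}, and apply the equivalence again. Your additional care in verifying that both $A$ and ${}_\tau A$ satisfy the hypotheses of that equivalence (local finiteness, semisimple degree-zero part, generation in degree one) is reasonable due diligence, though the paper treats all of this as routine.
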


Let $A$ be a twisted graded Calabi--Yau algebra as above and write $A \iso \kk Q/I$ where $I \subset \kk Q_{\geq 2}$. Let $(M,P,\degs)$ be the type of $A$. Let $\tau$ be a graded automorphism of $A$ and set $B={}_\tau A$. Hence, $B=\kk Q'/I'$ for some quiver $Q'$ and some ideal $I'$. It follows from the above that $Q_0=(Q')_0$. The number of arrows from vertex $i$ to vertex $j$ in $Q'$ is the dimension of $e_i \star B_1 \star e_j = \tau(e_i)A_1e_j$, which is the number of arrows from vertex $\tau(i)$ to vertex $j$ in $Q$. Thus we have $M_{Q'}=NM$ where $N$ is the permutation automorphism determined by $\tau$. The automorphism on $Q_0$ determines the action on $Q_1$ and so the permutation matrix on the vertices for $B$ is $NP$. It is further clear that $\tau$ preserves the degrees of the relations, so that $B$ is of type $(NM,NP,\degs)$.

\begin{example}\label{ex.twist}
Consider the action of the alternating group $A_4=\langle g,h : g^3=h^2=1, ghg=hg^2h \rangle$ on $\kk[x,y,z]$ corresponding to the (irreducible) representation
\[ g \mapsto \begin{psmatrix}1 & 0 & 0 \\ 0 & 0 & 1 \\ -1 & -1 & -1\end{psmatrix}, \quad
h \mapsto \begin{psmatrix}0 & 1 & 0 \\ 1 & 0 & 0  \\ -1 & -1 & -1\end{psmatrix}.\]
Since $\det(g)=\det(h)=1$, then the matrix $P$ corresponding to the Nakayama automorphism of $A=\kk[x,y,z]\#\kk A_4$ is the identity. The McKay matrix $M$ of this action is also the adjacency matrix of $A$ when realized as the quotient of a path algebra. In particular, we have
\[M=\begin{psmatrix}0 & 0 & 0 & 1 \\ 0 & 0 & 0 & 1 \\ 0 & 0 & 0 & 1 \\ 1 & 1 & 1 & 2\end{psmatrix}\]
and so the type of $A$ is $(M,I,3)$.

Now consider the twisting map induced by the automorphism of $\kk A_4$ given by $\tau(g)=\omega g$ and $\tau(h)=h$ where $\omega$ is a primitive third root of unity. This map preserves the one three-dimensional representation and permutes the three one-dimensional representations of $A_4$. Consequently, the matrix corresponding to the action of $\tau$ on the path algebra is 
\[ N = \begin{psmatrix}0 & 0 & 1 & 0 \\ 1 & 0 & 0 & 0 \\ 0 & 1 & 0 & 0 \\ 0 & 0 & 0 & 1\end{psmatrix}.\]
Since $MN=M$, it follows that the type of the twisted graded Calabi--Yau algebra ${}_\tau A$ is $(M,N,3)$.
\end{example}

\section{Quivers}\label{sec.quivers}

In this section we prove the various parts of Theorem \ref{thm.main}. Our analysis requires several routines written primarily for Sage\footnote{This code is available on github at code.ncalgebra.org. Everything in this paper can be run from fourvert.sage. The primary functions, however, are in the dependency file cypack.sage.},
some of which are adapted from Maple code written for \cite{GR1}. At one point, we also need some computations done in Mathematica.

Throughout this section, we assume $|Q_0|=4$. The subsections below correspond to the restriction of the Nakayama automorphism to $Q_0$. In particular, subsection \ref{sec.4cycle} considers a four-cycle, \ref{sec.3cycle} corresponds to a three-cycle, and \ref{sec.22cycle} corresponds to two two-cycles.

\subsection{Four cycle}\label{sec.4cycle}

In this section, we assume that the restriction $P$ of the Nakayama automorphism to $Q_0$ is a four-cycle. Hence, up to reordering the vertices, we may assume that
\begin{align}\label{eq.4cycle}
P = \begin{psmatrix}0 & 1 & 0 & 0 \\ 0 & 0 & 1 & 0 \\ 0 & 0 & 0 & 1 \\ 1 & 0 & 0 & 0\end{psmatrix}.
\end{align}
The analysis in this case is similar to \cite[Proposition 5.1]{GR1}, however we also make use of computational tools which simplify the argument at a certain point.

Recall that a $n \times n$ matrix $M$ is \emph{circulant} if $M_{ij}=M_{i+1,j+1}$ where indices are taken mod $n$. To simplify notation, we use $C(a,b,c,d)$ to denote the circulant ($4 \times 4$)  matrix whose first row is $(a,b,c,d)$.

\begin{lemma}\label{lem.4cycle}
Let $P$ be as in \eqref{eq.4cycle} and let $(M,P,\degs)$ be a type of an algebra satisfying Hypothesis \ref{hyp.main} with $|Q_0|=4$. Then $\degs=3$ and $M=C(a,b,c,d)$ for some nonnegative integers $a,b,c,d$. The possible tuples $(a,b,c,d)$ such that the corresponding $\det(p(t))$ has all roots on the unit circle are as follows:
\[
(0, 0, 0, 3), 
(0, 1, 1, 1),
(0, 1, 0, 2),
(0, 1, 2, 0), 
(1, 1, 1, 0), 
(1, 1, 0, 1),
(2, 1, 0, 0).
\]
\end{lemma}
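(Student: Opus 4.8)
The plan is to exploit the constraint that $M$ must commute with the four-cycle $P$ in order to reduce the statement to a finite, largely machine-checkable verification, following the strategy of \cite[Proposition 5.1]{GR1}. By Theorem~\ref{thm.mpoly} the matrices $M$ and $P$ commute; since the four-cycle $P$ of \eqref{eq.4cycle} is diagonalizable with the four distinct fourth roots of unity as eigenvalues, its centralizer in the $4\times 4$ matrices over $\kk$ equals $\kk[P]=\{aI+bP+cP^{2}+dP^{3}\}$, i.e.\ the ring of circulant matrices, so $M=C(a,b,c,d)$ with $a,b,c,d\in\NN$. I would then pass to the ring isomorphism between circulant matrices and $\kk[x]/(x^{4}-1)$ carrying $C(a_{0},a_{1},a_{2},a_{3})$ to $a_{0}+a_{1}x+a_{2}x^{2}+a_{3}x^{3}$; under it $P\mapsto x$, $M\mapsto m(x):=a+bx+cx^{2}+dx^{3}$, $M^{T}\mapsto m(x^{-1})$, and the matrix polynomial $p(t)$ of \eqref{eq.mpoly} corresponds to
\[
q(x,t):=1-m(x)\,t+x\,m(x^{-1})\,t^{s-1}-x\,t^{s}.
\]
Consequently $\det p(t)=\prod_{\zeta^{4}=1}q(\zeta,t)$, so each of the four factors $q(\zeta,t)$ divides $\det p(t)$ and hence, by Theorem~\ref{thm.mpoly}, has all of its roots on the unit circle.

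Writing $N:=a+b+c+d$, the $\zeta=1$ factor is $q(1,t)=1-Nt+Nt^{s-1}-t^{s}$, which factors as $(1-t)\bigl(t^{2}+(1-N)t+1\bigr)$ when $s=3$ and as $(1-t)(1+t)\bigl(t^{2}-Nt+1\bigr)$ when $s=4$. The quadratic factor has reciprocal roots, so its roots lie on the unit circle only if its discriminant is nonpositive; this forces $N\le 3$ when $s=3$ and $N\le 2$ when $s=4$. Since $Q$ is connected we have $N\ge 1$, so only finitely many circulant $M$ remain in each case. To prove $s=3$, I would run through the fourteen circulant matrices $C(a,b,c,d)$ with $1\le N\le 2$ and check the numerical conditions forced by our hypotheses: that $Q$ is connected, that every root of $\det p(t)=\prod_{\zeta}q(\zeta,t)$ lies on the unit circle, that $\det p(t)$ vanishes at $t=1$ to order at least $3$, and that $p(t)^{-1}$---which by Theorem~\ref{thm.mpoly} must be the Hilbert series $h_{A}(t)$---has nonnegative coefficients. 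A (computer-assisted) case check shows each candidate violates one of these: for instance $C(1,0,0,0)$ and $C(1,0,1,0)$ give disconnected quivers; $C(0,1,0,0)$ gives $\det p(t)=(1+t^{3})^{4}(1-t^{4})$, with only a simple zero at $t=1$; $C(0,2,0,0)$ produces the factor $q(-1,t)=1+2t+2t^{3}+t^{4}$, two of whose roots lie off the unit circle; and for $C(0,0,0,1)$, although $\det p(t)=(1-t^{2})(1+t^{2})(1-t^{3})^{2}(1+t^{3})^{2}$ satisfies every condition of Theorem~\ref{thm.mpoly}, the series $p(t)^{-1}$ has a negative coefficient in degree $3$. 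Hence $s=3$.

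It then remains to enumerate, among the finitely many circulant $M=C(a,b,c,d)$ with $s=3$ and $1\le N\le 3$, those for which $\det p(t)=\prod_{\zeta}q(\zeta,t)$ has all roots on the unit circle and vanishes to order at least $3$ at $t=1$. This is a routine computation, and the tuples that survive are exactly the seven in the statement.

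The step I expect to be the main obstacle is the elimination of $s=4$. The bound $N\le 2$ is immediate, but a couple of the surviving candidates---most notably $C(0,0,0,1)$---pass every condition of Theorem~\ref{thm.mpoly}, so ruling them out genuinely requires the additional input that $p(t)^{-1}$ be an honest Hilbert series (equivalently, by Theorem~\ref{thm.presentation}, that no nonzero degree-four twisted superpotential exists on the corresponding quiver). Keeping track of which necessary condition eliminates which of the finitely many cases, and confirming the two enumerations by machine, is where essentially all of the work lies.
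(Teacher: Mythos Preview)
Your argument is correct and is essentially the diagonalization approach of \cite[Proposition~5.1]{GR1} that the paper explicitly mentions as an alternative before opting for a pure computer check. The one substantive difference is how the row sum $N=a+b+c+d$ is pinned down. The paper observes that a circulant $M$ is normal, so by \cite[Proposition~8.10(3)]{RR1} the spectral radius equals $6-s$; since every row sum of a circulant is $N$, Perron--Frobenius gives $\rho(M)=N$, whence $N=6-s$ \emph{exactly}. You instead extract only the inequality $N\le 6-s$ from the $\zeta=1$ factor, and must then eliminate the extra $N<6-s$ candidates one by one. This is why the $s=4$ case becomes your ``main obstacle'': the troublesome $C(0,0,0,1)$ has $N=1\ne 2=6-s$, so the spectral radius argument kills it instantly without any appeal to nonnegativity of the Hilbert series. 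With $N=6-s$ in hand, the paper's finite check for $s=4$ involves only the ten tuples with $N=2$, and the computation shows none of these yield a product of cyclotomic polynomials---no further conditions needed. Your factorization $\det p(t)=\prod_{\zeta^4=1}q(\zeta,t)$ is a clean way to organize the enumeration and makes the eigenvalue structure transparent, but adding the one-line spectral radius observation would remove the need for the Hilbert-series step and align your case count with the paper's.
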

\begin{proof}
Since $M$ and $P$ commute, then $M=C(a,b,c,d)$ for some nonnegative integers $a,b,c,d$. Consequently, $M$ is normal and so the spectral radius satisfies $\rho(M)=6-\degs$ \cite[Proposition 8.10 (3)]{RR1}. As $\rho(M)$ is bounded below by the minimal row sum and bounded above by the maximal row sum \cite[Theorem 8.1.22]{HJ}, we have $a+b+c+d=6-\degs$.

Since $M$ and $P$ are both normal and they commute, then they are simultaneously diagonalizable. One could at this point utilize a diagonalization argument similar to \cite[Proposition 5.1]{GR1}. Here we will rely on computational tools\footnote{Run four\_chk(s).}. First, we form a list of all tuples $(a,b,c,d)$ satisfying $a+b+c+d=6-\degs$. Then we construct the corresponding circulant matrix and compute its matrix polynomial as in \eqref{eq.mpoly}. This is then compared to the list of products of cyclotomic matrix polynomials of degree $4\degs$. The result is a list of matrices whose matrix polynomial has roots all lying on the unit circle. This computation shows that there are no such matrices when $\degs=4$.
\end{proof}

It is left to show that each tuple in Lemma \ref{lem.4cycle} (2) supports a twisted graded Calabi--Yau algebra with appropriate Nakayama automorphism.

\begin{theorem}\label{thm.4cycle}
Let $P$ be as in \eqref{eq.4cycle} and let $(M,P,\degs)$ be a type of a twisted graded Calabi--Yau algebra satisfying Hypothesis \ref{hyp.main} with $|Q_0|=4$. Then $\degs=3$ and $M$ is one of 
\[ C(1, 1, 0, 1), C(0, 0, 0, 3), C(0, 1, 0, 2), C(0, 1, 2, 0), C(1, 1, 1, 0), C(2, 1, 0, 0), C(0, 1, 1, 1)^*.\]
Conversely, each is the adjacency matrix of a quiver that supports a twisted graded Calabi--Yau algebra whose Nakayama automorphism restricts to \eqref{eq.4cycle} on $Q_0$, except possibly the one marked by $^*$.
\end{theorem}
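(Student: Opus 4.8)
The forward direction is immediate from Lemma \ref{lem.4cycle}: for a twisted graded Calabi-Yau algebra every zero of $\det p(t)$ is a root of unity by Theorem \ref{thm.mpoly}, hence lies on the unit circle, so the type $(M,P,s)$ forces $s=3$ and $M = C(a,b,c,d)$ with $(a,b,c,d)$ one of the seven tuples in that lemma. (One also checks that for each of these seven $\det p(t)$ vanishes at $t=1$ to order exactly $3$, as is needed for $\GKdim A = 3$; this rules out nothing further.) So the work is to realize the six unstarred circulants.

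Three of them, $C(0,0,0,3)$, $C(0,1,2,0)$, and $C(2,1,0,0)$, will be produced as skew group rings $\kk[x,y,z]\#\kk C_4$ as in Example \ref{ex.mckay2}. For a diagonal action $g \mapsto \diag(i^{a_1},i^{a_2},i^{a_3})$ the McKay quiver has a circulant adjacency matrix depending only on $\{a_1,a_2,a_3\}\bmod 4$, and the Nakayama automorphism restricts on $A_0 \iso \kk C_4$ to the winding automorphism $g \mapsto \det(g)\,g = i^{a_1+a_2+a_3}g$, which permutes the four idempotents by a four-cycle exactly when $a_1+a_2+a_3$ is odd. The plan is to exhibit for each of the three circulants a diagonal action with odd exponent sum realizing it --- for instance $\diag(-i,-i,-i)$, $\diag(i,-1,-1)$, and $\diag(1,1,i)$ --- verify that the McKay quiver is connected, and check (after possibly reordering the vertices) that the Nakayama permutation is \eqref{eq.4cycle} and that $A$ is twisted graded Calabi-Yau of dimension and GK-dimension $3$. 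The remaining circulant these methods reach is $C(1,1,0,1)$, but not as a skew group ring of $\kk[x,y,z]$: the only exponent multiset whose McKay quiver is $C(1,1,0,1)$ is $\{0,1,3\}$, which has even sum, so its Nakayama permutation is trivial. Instead $C(1,1,0,1)$ is precisely the Ore extension $A = B[t;\rho]$ of $B = \kk[x,y]\#\kk C_4$ from Example \ref{ex.ore}(1): there $A$ has type $(M+(P')^{-1},P(P')^{-1},3)$ with $M+(P')^{-1} = C(1,1,0,1)$ and $P(P')^{-1}$ a four-cycle, and $\GKdim A = \GKdim B + 1 = 3$, so Hypothesis \ref{hyp.main} holds.

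For the last two, $C(1,1,1,0)$ and $C(0,1,0,2)$, the techniques above do not suffice --- by the same exponent bookkeeping, no skew group ring of $\kk[x,y,z]$ and no Ore extension of $\kk[x,y]\#\kk C_4$ pairs either circulant with a four-cycle --- and here we pass to graded twists. The plan is to start from $A = \kk[x,y,z]\#\kk C_4$ for the two diagonal actions with $\det(g) = 1$, namely $\diag(1,i,-i)$ and $\diag(1,-1,-1)$; since the homological determinant is trivial these are genuinely graded Calabi-Yau, of types $(C(1,1,0,1),I,3)$ and $(C(1,0,2,0),I,3)$ respectively. Let $\tau$ be the graded automorphism of $A$ that is the identity on $\kk[x,y,z]$ and sends $g \mapsto \zeta g$ for a primitive fourth root of unity $\zeta$; a direct check that this respects the relations $gr = g(r)g$ shows $\tau$ is a well-defined graded automorphism, and it induces a four-cycle $N$ on the vertices. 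By Corollary \ref{cor.twist} and the type computation for twists in Section \ref{sec.twist}, ${}_{\tau}A$ is twisted graded Calabi-Yau of type $(NM,N,3)$, and multiplying circulants gives $N\cdot C(1,1,0,1) = C(1,1,1,0)$ and $N\cdot C(1,0,2,0) = C(0,1,0,2)$ once the vertices are relabeled so that $N$ is \eqref{eq.4cycle}. This finishes the six constructions.

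The matrix $C(0,1,1,1)$ is left marked by $^*$, and explaining the obstruction is the main difficulty. The skew group ring of $\kk[x,y,z]$ whose McKay quiver is $C(0,1,1,1)$ has exponent multiset $\{1,2,3\}$, whose sum is even, so its Nakayama permutation is two two-cycles rather than a four-cycle; and running through the possible products of circulant matrices shows that no graded twist of a (twisted) graded Calabi-Yau skew group ring of $\kk[x,y,z]$, and no Ore extension of $\kk[x,y]\#\kk C_4$, produces $C(0,1,1,1)$ together with a four-cycle. Every construction in the toolkit systematically puts the wrong permutation on the four vertices of this quiver, and we do not resolve whether some more elaborate construction --- for instance replacing $\kk[x,y,z]$ by a noncommutative AS-regular algebra --- realizes it; this is why it is left open.
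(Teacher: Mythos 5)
Your overall architecture matches the paper's: the forward direction is exactly Lemma \ref{lem.4cycle} (which already assumes Hypothesis \ref{hyp.main} and invokes Theorem \ref{thm.mpoly}), and $C(1,1,0,1)$ is realized by the Ore extension of Example \ref{ex.ore}(1), just as in the paper. The constructions you give for the remaining five matrices are all valid, so the proof closes; but the route you take for $C(1,1,1,0)$ and $C(0,1,0,2)$ is forced by a claim that is false. You assert that ``no skew group ring of $\kk[x,y,z]$ pairs either circulant with a four-cycle,'' yet your own exponent bookkeeping shows the opposite: for a diagonal action $g\mapsto\diag(i^{a_1},i^{a_2},i^{a_3})$ of $C_4$, the McKay quiver is $C(1,1,1,0)$ (up to the relabeling $C(a,b,c,d)\leftrightarrow C(a,d,c,b)$) exactly for the exponent multiset $\{0,1,2\}$, whose sum $3$ is odd, and it is $C(0,1,0,2)$ exactly for $\{1,1,3\}$, whose sum $5$ is odd; in both cases $\hdet(g)=\det(g)$ is a primitive fourth root of unity, so the winding automorphism permutes the four idempotents by a four-cycle. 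The paper realizes these two matrices precisely this way, via $\diag(1,-1,i)$ and $\diag(i,i,-i)$. Your graded-twist substitute --- twisting the graded Calabi--Yau algebras $\kk[x,y,z]\#\kk C_4$ attached to $\diag(1,i,-i)$ and $\diag(1,-1,-1)$ by the character automorphism $g\mapsto\zeta g$ --- is legitimate (it is the same device the paper uses in Example \ref{ex.twist} and in the proof of Theorem \ref{thm.3cycle}), and the type computations $N\cdot C(1,1,0,1)=C(1,1,1,0)$ and $N\cdot C(1,0,2,0)=C(0,1,0,2)$ are correct for the right choice of $\zeta$; it is simply not needed here. The one genuine obstruction of the kind you describe is for $C(0,1,1,1)$, where the exponent multiset $\{1,2,3\}$ has even sum and the winding automorphism is two two-cycles; your account of that case agrees with the paper's remark following the theorem and with Lemma \ref{lem.22iff}. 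Finally, both you and the paper are loose about distinguishing the four-cycle \eqref{eq.4cycle} from its inverse (equivalently $C(a,b,c,d)$ from $C(a,d,c,b)$) when matching a construction to the stated list; that is a bookkeeping convention rather than a gap, but it is worth stating explicitly which convention you are using.
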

\begin{proof}
The matrix $C(1,1,0,1)$ was constructed in Example \ref{ex.ore} (1) using Ore extensions. We can realize most of the remaining matrices using McKay quivers. Let $R=\kk[x,y,z]$. In the table below we give the action of $C_4=\grp{g}$ on $R$ along with the corresponding McKay quiver:
\[ \begin{array}{|c|c|}\hline
g & M \\ \hline
\diag(i,i,i) 		& C(0,0,0,3) \\
\diag(-1,-1,-i) 	& C(0,1,2,0) \\
\diag(i,i,-i)		& C(0,0,2,1) \\
\diag(1,-1,i) 	& C(1,0,1,1) \\
\diag(1,1,i) 	& C(2,1,0,0) \\
\hline
\end{array}\]
where $i=\sqrt{-1}$. In each case, the skew group ring $R\#\kk C_4$ is isomorphic to a derivation-quotient algebra on a quiver with the corresponding adjacency matrix.
\end{proof}

Given the natural symmetry of the quiver, it is surprising that $C(0, 1, 1, 1)$ does not appear as the McKay quiver for an action on $\kk[x,y,z]$ with a four-cycle permutation matrix. In Lemma \ref{lem.22iff} below, we show that this quiver appears if and only if a certain quiver in Theorem \ref{thm.22cycle} appears.

\subsection{Three cycle}\label{sec.3cycle}

In this case we consider when $P$ corresponds to a three-cycle. Explicitly, we take
\begin{align}\label{eq.3cycle}
P = \begin{psmatrix}0 & 0 & 1 & 0 \\ 1 & 0 & 0 & 0 \\ 0 & 1 & 0 & 0 \\ 0 & 0 & 0 & 1\end{psmatrix}.
\end{align}

\begin{lemma}\label{lem.3cycle1}
Let $P$ be as in \eqref{eq.3cycle} and let $(M,P,d)$ be a type of an algebra satisfying Hypothesis \ref{hyp.main} with $|Q_0|=4$. Then the matrix $M$ has the form
\begin{align}\label{eq.3cycleM}
M = \begin{psmatrix}w & x & y & u\\y & w & x & u \\x & y & w & u \\ u & u & u & r\end{psmatrix}
\end{align}
where the entries are nonnegative integers satisfying $w \leq s+1$ and $r \in \{0,1,2,3\}$ with $3w+r \leq 4\degs$. In particular, $M$ is normal.
\end{lemma}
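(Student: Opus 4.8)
The plan is to read off the shape of $M$ from the commutation relation of Theorem~\ref{thm.mpoly}, to use the self-duality built into the twisted Calabi--Yau condition to identify the two ``spoke'' parameters, and then to bound the surviving entries with the spectral estimate $\rho(M)=6-s$ exactly as in the proof of Lemma~\ref{lem.4cycle}.

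First I would note that the matrix $P$ of \eqref{eq.3cycle} induces the permutation $\mu=(1\,3\,2)$ of $\{1,2,3\}$ fixing the vertex $4$, so by Theorem~\ref{thm.mpoly} the equality $MP=PM$ says precisely that $M$ is constant on the orbits of $\langle\mu\rangle$ acting diagonally on pairs of indices. There are six such orbits: $\{(1,1),(2,2),(3,3)\}$, $\{(1,2),(2,3),(3,1)\}$, $\{(1,3),(2,1),(3,2)\}$, $\{(1,4),(2,4),(3,4)\}$, $\{(4,1),(4,2),(4,3)\}$ and $\{(4,4)\}$. Writing $w,x,y$ for the common values on the first three, $u$ and $u'$ for the common values of the $M_{i4}$ and of the $M_{4i}$ with $i\le 3$, and $r$ for $M_{44}$, one sees immediately that the top-left $3\times 3$ block of $M$ is the circulant matrix with first row $(w,x,y)$, the last column of its top three rows is $(u,u,u)^{T}$, and its last row is $(u',u',u',r)$. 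The only thing separating this from \eqref{eq.3cycleM} is the claim $u=u'$.

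For $u=u'$ I would invoke the self-duality of the minimal graded projective resolution $0\to P^3\to P^2\to P^1\to P^0\to S\to 0$ carried by the twisted Calabi--Yau (equivalently, generalized AS regular) hypothesis: up to the Nakayama twist, $P^2$ is the $A$-dual of $P^1$, so for each vertex $v$ the multiplicity of the indecomposable projective $Ae_v$ as a summand of $P^1$ equals its multiplicity in $P^2$ once $v$ is moved by the Nakayama permutation $\mu$. This is what the form $p(t)=I-Mt+PM^{T}t^{s-1}-Pt^{s}$ of Theorem~\ref{thm.mpoly} records, the $t$- and $t^{s-1}$-coefficients being one another's transpose up to sign and the twist by $P$; reading off the contribution at the $\mu$-fixed vertex $4$ then forces the row sum and the column sum of $M$ at that vertex to coincide, i.e.\ $3u'+r=3u+r$, so $u=u'$ and $M$ has the form \eqref{eq.3cycleM}.

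For the numerical constraints I would argue as in the proof of Lemma~\ref{lem.4cycle}. By \cite[Proposition~8.10(3)]{RR1} the spectral radius of $M$ equals $6-s$; since $M$ is entrywise nonnegative each diagonal entry is at most $\rho(M)$, which bounds $w=M_{11}$ and $r=M_{44}$ by $6-s\le 3$, and $3w+r=\operatorname{tr}(M)$ is a sum of eigenvalues of $M$ each of modulus at most $\rho(M)$, hence $3w+r\le 4(6-s)\le 4s$ because $s\ge 3$. The precise ranges asserted for $w$ and $r$ then come from combining this with the Perron--Frobenius inequalities squeezing $\rho(M)$ between the minimal and maximal row (and column) sums of $M$, the connectedness of $Q$, and the requirement from Theorem~\ref{thm.mpoly} that $\det p(t)$ be a product of cyclotomic polynomials vanishing to order at least $3$ at $t=1$. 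I expect this last piece of bookkeeping, together with the $u=u'$ step above, to be the crux: the identity $p(t)=I-Mt+PM^{T}t^{s-1}-Pt^{s}$ makes $\det p(t)$ depend on $u$ and $u'$ only through the unordered pair $\{u,u'\}$, so the cyclotomic condition by itself cannot separate $u$ from $u'$ and the self-duality input is genuinely required rather than cosmetic.
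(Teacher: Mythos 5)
Your reduction to the form with two a priori different spoke parameters $u$ (last row) and $u'$ (last column) matches the paper, but the step where you identify them is where your argument breaks down. The ``self-duality'' of the minimal resolution is exactly the information already encoded in the shape $p(t)=I-Mt+PM^{T}t^{s-1}-Pt^{s}$: dualizing the resolution of $S_v$ reproduces the resolution of the right simple at $\mu(v)$ with second term governed by $PM^{T}$, and unwinding the indices only returns $MP=PM$ --- it does \emph{not} force the rank of $P^1$ to equal the rank of $P^2$ vertex by vertex. Your claimed identity ``row sum of $M$ at $4$ equals column sum of $M$ at $4$'' is equivalent to $p(1)\mathbf{1}=0$ (a direct computation gives $p(1)\mathbf{1}=(u-u')(1,1,1,-3)^{T}$), and nothing in the twisted Calabi--Yau hypotheses makes $\mathbf{1}$ a null vector of $p(1)$; only $\det p(1)=0$ is guaranteed. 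The paper's actual route to $u=u'$ is substantially harder: $D(1)=0$ factors as $(63s-180)(u-v)(w-y-1)$, so either $u=v$ or $w=y+1$; in the latter case $D''(1)=0$ eliminates $s=4$ and forces $x=y-1$ when $s=3$, and a further (computer-assisted) comparison of the $t^{10}$ coefficient against products of cyclotomic polynomials leaves only $uv=0$ (not strongly connected) or $uv=1$ (contradicting $u\neq v$). In particular your closing remark --- that the cyclotomic condition cannot do the job because $\det p(t)$ sees only the unordered pair $\{u,u'\}$ --- misdiagnoses the situation: one does not need to distinguish $u$ from $u'$, only to exclude every unordered pair with $u\neq u'$, and that is precisely what the paper's coefficient analysis accomplishes.

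There is a second, related problem in your numerical bounds. You invoke $\rho(M)=6-s$ from \cite[Proposition 8.10(3)]{RR1} to bound $w$, $r$, and $\operatorname{tr}(M)$, but that statement is applied in this paper only to \emph{normal} $M$, and the matrix in question is normal precisely when $u=u'$ (compare the $(4,4)$ entries of $MM^{T}$ and $M^{T}M$, which are $3u^2+r^2$ and $3u'^2+r^2$). So your spectral bounds presuppose the very identity you have not yet established. The paper avoids this by obtaining $3w+r\leq 4s$ from the linear coefficient of the palindromic cyclotomic factorization of $\det p(t)$, and $r\leq 6-s$ from \cite[Proposition 3.8]{GR1} applied to the $P$-fixed vertex $4$; the spectral radius argument is deferred to Lemma \ref{lem.3cycle2}, after normality is available.
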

\begin{proof}
In order for a matrix $M$ to commute with $P$, $M$ must be of the form
\[ M = \begin{psmatrix}w & x & y & v\\y & w & x & v \\x & y & w & v \\ u & u & u & r\end{psmatrix}.\]
Let $p(t)$ be as in \eqref{eq.mpoly} and set $D(t)=\det(p(t))$. 
Then $D(t)$ is palindromic by Lemma \ref{lem.palindrome}. By Theorem \ref{thm.mpoly}, $(t-1)^3$ divides $D(t)$. But then $D(t)/(t-1)^3$ is antipalindromic and so has a root of $1$.
Moreover, $D(t)$ is palindromic and a product of cyclotomic polynomials. Hence we have
\begin{align}\label{eq.3vertD}
D(t)= (1-t)^4 \prod_{i=1}^{2(\degs-1)} (1-k_i t + t^2) \quad\text{for some } k_i \in \mathbb{R}, |k_i| \leq 2.
\end{align}
Comparing the $t$ coefficient of $D(t)$ with the generic form given in \eqref{eq.3vertD}, we have
\[ 3w+r = 4 + (k_1+\cdots+k_{2(\degs-1)}) \leq 4\degs.\]
Hence, $w$ is at most $4$ if $\degs=3$ and $5$ if $\degs=4$. Because vertex 4 is fixed by $P$, then \cite[Proposition 3.8]{GR1} implies that $r$ is at most $6-\degs$.

We claim now that $u=v$, which will imply that $M$ is normal. By Theorem \ref{thm.mpoly}, $D(t)$ vanishes at $t=1$ to order at least $3$. Thus\footnote{Run mpoly(M,P,s) to obtain $D(t)$},
\[ 0 = D(1) = 9(u-v)^2(w-y-1)^2.\]
Hence, either $u=v$ or $w=y+1$.

If $u=v$, then we are done. Assume $w=y+1$. Making this substitution, we find that
\[ 0=D''(1) = 
\begin{cases}
6(u - v)^2(x - y + 1)^2 & \degs=3 \\
6(u - v)^2(2x - 2y + 1)^2 & \degs=4.
\end{cases}\]
If $\degs=4$, then we have $2x-2y=-1$, which is impossible since $x$ and $y$ are integers. We conclude that $\degs=3$ so we have $x=y-1$. Since $y+1=w\leq 4$, then making a choice from
\[\{(r,y) : r \in \{0,1,2,3\}, y \in \{0,1,2,3\}\}\]
determines a matrix whose remaining indeterminates are $u$ and $v$. From this matrix, we can compare to coefficients of $t^{10}$ in $D(t)$ compared to an arbitrary product of cyclotomic polynomials\footnote{Run y\_check().}. The only cases which have nonnegative integer solutions are those of the form $uv=0$ and $uv=1$. If $uv=0$, then the matrix is not strongly connected. If $uv=1$, then because the $u$ and $v$ are nonnegative integers, we conclude that $u=v$, as claimed.
\end{proof}

\begin{lemma}\label{lem.3cycle2}
Let $P$ be as in \eqref{eq.3cycle} and let $(M,P,\degs)$ be a type of an algebra satisfying Hypothesis \ref{hyp.main} with $|Q_0|=4$. Then $\degs=3$ and the possible matrices $M$ satisfying such that the corresponding $\det(p(t))$ \eqref{eq.mpoly} has all roots on the unit circle are as follows:
\[
\begin{psmatrix}0 & 0 & 0 & 1 \\0 & 0 & 0 & 1 \\ 0 & 0 & 0 & 1 \\ 1 & 1 & 1 & 2\end{psmatrix},
\begin{psmatrix}1 & 0 & 1 & 1 \\ 1 & 1 & 0 & 1 \\ 0 & 1 & 1 & 1 \\ 1 & 1 & 1 & 0\end{psmatrix},
\begin{psmatrix}0 & 1 & 1 & 1 \\ 1 & 0 & 1 & 1 \\ 1 & 1 & 0 & 1 \\ 1 & 1 & 1 & 0\end{psmatrix}.
\]
\end{lemma}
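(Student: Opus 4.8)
The plan is to reduce the statement to a finite, explicit search using Lemma~\ref{lem.3cycle1} and the normality of $M$, and then to organize that search through an honest factorization of $\det p(t)$ that exploits the block structure of $P$.

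By Lemma~\ref{lem.3cycle1}, $M$ already has the form \eqref{eq.3cycleM}, and one checks directly that any such matrix is \emph{normal}: its top-left $3\times 3$ block is circulant, hence normal, and the border row and column both point in the direction of the common eigenvector $\mathbf 1=(1,1,1)^{T}$. Since $M$ is normal, \cite[Proposition 8.10(3)]{RR1} gives $\rho(M)=6-s$, and normality also yields $\sum_{i,j}M_{ij}^{2}=\sum_{i}|\lambda_i|^{2}\le 4\rho(M)^{2}=4(6-s)^{2}$, where $\lambda_1,\dots,\lambda_4$ are the eigenvalues of $M$. In particular every entry of $M$ is at most $2(6-s)$, and this together with the bounds on $w$ and $r$ in Lemma~\ref{lem.3cycle1} leaves only finitely many candidate matrices, for each of $s=3$ and $s=4$.

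Next I would compute $D(t)=\det p(t)$ for each candidate and test whether all of its roots lie on the unit circle (equivalently, whether $D(t)$ is a product of cyclotomic polynomials), discarding in passing any $M$ that fails to be strongly connected. The block structure of $P$ — a $3$-cycle $\gamma$ on $\{1,2,3\}$ and the identity on $\{4\}$ — makes this transparent. Writing $M=\begin{psmatrix}C & u\mathbf 1\\ u\mathbf 1^{T} & r\end{psmatrix}$ with $C=C(w,x,y)$, and using that $\mathbf 1$ is a common eigenvector of $\gamma$, $C$ and $C^{T}$, a Schur-complement computation gives
\[ D(t)=g(t)\,\overline{g(t)}\,\bigl(\beta(t)f(t)-3u^{2}\alpha(t)^{2}\bigr), \]
where $f(t)=1-\sigma t+\sigma t^{s-1}-t^{s}$ with $\sigma=w+x+y$, $\alpha(t)=-t+t^{s-1}$, $\beta(t)=1-rt+rt^{s-1}-t^{s}$, and $g(t)=1-c_{1}t+\bar\omega\,\overline{c_{1}}\,t^{s-1}-\bar\omega\,t^{s}$ is the Fourier factor of $\det q(t)$ at a nontrivial cube root of unity $\omega$, with $c_{1}=w+\omega x+\omega^{2}y$. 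Thus all roots of $D(t)$ lie on the unit circle exactly when the conjugate pair $g,\overline{g}$ does and the last factor does; for $s=3$ the last factor is $(1-t)^{2}$ times a palindromic quartic, so the substitution $z=t+t^{-1}$ turns the remaining condition into the requirement that both roots of an explicit quadratic in $z$ lie in $[-2,2]$. Running over the finite candidate list — precisely the job of the accompanying Sage routine — one finds that $s=4$ produces no valid matrix and that $s=3$ produces exactly the three displayed ones, and one also verifies that $\det p(t)$ vanishes to order at least $3$ at $t=1$ in each case, consistently with Hypothesis~\ref{hyp.main}.

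The main obstacle I anticipate is bookkeeping rather than any isolated difficulty: several matrices will meet the normality and row-sum bounds and even produce $D(t)$ with a triple zero at $t=1$ without $D(t)$ being a product of cyclotomics, and each such near-miss — along with the non-strongly-connected cases (the $uv=0$ phenomenon already seen in Lemma~\ref{lem.3cycle1}) — must be eliminated by hand or by machine; likewise the $s=4$ case must be dispatched by exhausting its somewhat larger candidate list rather than by a short argument. Keeping the entry bounds tight and the factorization above explicit is what makes the residual search manageable.
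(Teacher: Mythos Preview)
Your proposal is correct and follows the same overall strategy as the paper: use Lemma~\ref{lem.3cycle1} to put $M$ in block form, observe normality, invoke $\rho(M)=6-s$ from \cite[Proposition 8.10(3)]{RR1}, reduce to a finite list, and then verify by machine. The tactical differences are minor. Where you bound the entries via the Frobenius identity $\sum_{i,j} M_{ij}^2=\sum_i|\lambda_i|^2\le 4(6-s)^2$, the paper instead writes down the four eigenvalues of $M$ explicitly (using that the $3\times 3$ block is circulant and the border is in the $\mathbf 1$-direction) and reads off from the largest one that $w+x+y+r\le 2(6-s)$ and $u\in\{0,1\}$; this gives a sharper finite list than yours, but not essentially so. Your Schur-complement factorization of $D(t)$ is additional structure the paper does not exploit---the paper simply feeds each candidate $M$ into \eqref{eq.mpoly} and compares $\det p(t)$ against products of cyclotomics---so your route trades a slightly larger search space for a more transparent per-candidate check.
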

\begin{proof}
By Lemma \ref{lem.3cycle1}, we may assume that $M$ has the form \eqref{eq.3cycleM}, so $M$ is normal. By \cite[Proposition 8.10(3)]{RR1}, the spectral radius of $M$ is $6-\degs$. Let $\beta=1+\sqrt{-3}$. Then one finds that the eigenvalues of $M$ are
\begin{align*}
a &= -\frac{1}{2} \left( y\beta - x\overline{\beta}  + w\right), \qquad
b = \frac{1}{2} \left( y\overline{\beta} - x\beta  + w\right), \\
e_{\pm} &= \frac{1}{2} \left[ (w+x+y+r) \pm \sqrt{ ( w+x+y-r)^2 + 12u^2 } \right].
\end{align*}
Since $\rho(M)=6-s$, then we have $e_+ \leq 6-s$. Consequently, $w+x+y+r \leq 2e_+ \leq 2(6-\degs)$. By similar reasoning, $\sqrt{12u^2} \leq 2(6-\degs)$ so we obtain $u\in \{0,1\}$. If $u=0$ then the matrix is not strongly connected, so we conclude that $u=1$.

This leaves us with a finite number of matrices to check, which we do by computer\footnote{Run tre(s).}. We substitute in each possible set of values for the entries of $M$, then check the spectral radius, strong connectedness, and finally whether the resulting matrix polynomial appears as a product of cyclotomic polynomials. The result shows that no such matrices exist when $\degs=4$.
\end{proof}

We complete our analysis below.

\begin{theorem}\label{thm.3cycle}
Let $P$ be as in \eqref{eq.3cycle} and let $(M,P,\degs)$ be a type of an algebra satisfying Hypothesis \ref{hyp.main} with $|Q_0|=4$. Then $\degs=3$ and $M$ is one of the following:
\[
\begin{psmatrix}1 & 0 & 1 & 1 \\ 1 & 1 & 0 & 1 \\ 0 & 1 & 1 & 1 \\ 1 & 1 & 1 & 0\end{psmatrix},
\begin{psmatrix}0 & 0 & 0 & 1 \\ 0 & 0 & 0 & 1 \\ 0 & 0 & 0 & 1 \\ 1 & 1 & 1 & 2\end{psmatrix},
\begin{psmatrix}0 & 1 & 1 & 1 \\ 1 & 0 & 1 & 1 \\ 1 & 1 & 0 & 1 \\ 1 & 1 & 1 & 0\end{psmatrix}^*.
\]
Conversely, each is the adjacency matrix of a quiver that supports a twisted graded Calabi--Yau algebra whose Nakayama automorphism restricts to \eqref{eq.3cycle} on $Q_0$, except possibly the one marked by $^*$.
\end{theorem}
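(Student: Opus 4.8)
The statement splits into the list of possible $M$ (necessity) and the realization of all but the one marked $^*$ (sufficiency); throughout, $P$ is as in \eqref{eq.3cycle} and the starred matrix is the complete-graph matrix $K_4$.

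\textbf{Necessity, and the star-quiver matrix.} Necessity is immediate from Lemma \ref{lem.3cycle2}: for a type $(M,P,s)$ satisfying Hypothesis \ref{hyp.main} with $P$ as in \eqref{eq.3cycle}, Theorem \ref{thm.mpoly} makes $\det p(t)$ a product of cyclotomic polynomials, so all of its roots lie on the unit circle, and then Lemma \ref{lem.3cycle2} forces $s=3$ and $M$ into the displayed list. The matrix $\begin{psmatrix}0 & 0 & 0 & 1 \\ 0 & 0 & 0 & 1 \\ 0 & 0 & 0 & 1 \\ 1 & 1 & 1 & 2\end{psmatrix}$ is already realized: Example \ref{ex.twist} produces the graded twist ${}_\tau(\kk[x,y,z]\#\kk A_4)$, with $\tau$ scaling the order-three generator of $A_4$ by a primitive cube root of unity and fixing the involution, and shows its type is $(M,N,3)$ with $N$ equal to the matrix $P$ of \eqref{eq.3cycle}. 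By Corollary \ref{cor.twist} this algebra is twisted graded Calabi-Yau, so $(M,P,3)$ occurs.

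\textbf{The remaining non-starred matrix.} Write $M=\begin{psmatrix}1 & 0 & 1 & 1 \\ 1 & 1 & 0 & 1 \\ 0 & 1 & 1 & 1 \\ 1 & 1 & 1 & 0\end{psmatrix}$, and note that $M$ is $K_4$ with three of its rows cyclically permuted, i.e. $M=NK_4$ for a permutation matrix $N$ that fixes one vertex and acts as a $3$-cycle on the remaining three. Since $\kk[x,y,z]\#\kk(C_2\times C_2)$ of Example \ref{ex.mckay2} has type $(K_4,I,3)$, the plan is to exhibit a graded automorphism $\tau$ of this skew group ring inducing $N$ on the idempotents; then by the discussion preceding Example \ref{ex.twist}, ${}_\tau(\kk[x,y,z]\#\kk(C_2\times C_2))$ has type $(NK_4,N,3)$, and relabeling vertices so that the Nakayama permutation takes the form \eqref{eq.3cycle} turns this into the required type --- the $3$-cycle $N$ is conjugate to $P$, possibly after replacing $P$ by $P\inv$ (harmless, since both are ``the three-cycle''), and the corresponding relabeling carries $NK_4$ to $M$ or to a conjugate matrix occurring in the same list. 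Such a $\tau$ comes from a pair $(\psi,\phi)$: take $\phi$ the automorphism of $C_2\times C_2$ that cyclically permutes its three non-identity elements (a generator of the order-three subgroup of $\Aut(C_2\times C_2)\iso S_3$), and $\psi$ the coordinate permutation of $\kk[x,y,z]$ intertwining $\phi$ with the diagonal representation of Example \ref{ex.mckay2}, i.e. $\psi\,g=\phi(g)\,\psi$ for all $g\in C_2\times C_2$ --- such a $\psi$ exists because the three diagonal operators $g$, $h$, $gh$ are carried into one another by a permutation of the coordinate axes. Then $r\tensor g\mapsto\psi(r)\tensor\phi(g)$ is a graded automorphism of the skew group ring inducing $\phi$, hence $N$, on the idempotents, and Corollary \ref{cor.twist} gives the twisted graded Calabi-Yau property. (An alternative is to realize $M$ through an Ore extension of $\kk[x,y]\#\kk(C_2\times C_2)$ by an automorphism inducing a $3$-cycle, via the type formula $(M_B+(P')\inv,\,P_B(P')\inv,3)$.)

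\textbf{The starred matrix and the main obstacle.} The type $(K_4,P,3)$ is left open. The complete graph $K_4$ is the McKay quiver of $C_2\times C_2$ acting on $\kk[x,y,z]$, but for every such action the Nakayama automorphism scales group elements by a character of $C_2\times C_2$ and so permutes the four idempotents by a permutation of order at most two, never a $3$-cycle; moreover a graded twist of an algebra with adjacency matrix $K_4$ whose twist again has adjacency matrix $K_4$ must be trivial, since $NK_4=K_4$ forces $N=I$ (the rows of $K_4$ being distinct), so such a twist cannot alter the Nakayama permutation. Thus none of the standard constructions reach this type, and --- much as for $C(0,1,1,1)$ in the four-cycle case, cf. Lemma \ref{lem.22iff} --- one expects its occurrence to be equivalent to that of a companion matrix in another case. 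The real work in the theorem is therefore twofold: first, engineering the $3$-cycle Nakayama permutation for the non-starred matrix $M$, where the naive skew group rings presenting $K_4$ carry too small a Nakayama permutation and one genuinely needs a graded twist (Theorem \ref{thm.AStwist}) or a carefully chosen Ore extension; and second, the observation that $K_4$ with a $3$-cycle lies beyond all of these methods, which is exactly why it carries the asterisk.
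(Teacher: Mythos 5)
Your proposal is correct and follows essentially the same route as the paper: necessity via Lemma \ref{lem.3cycle2}, the star-shaped matrix via the $A_4$ twist of Example \ref{ex.twist}, and the remaining matrix via a graded twist of $\kk[x,y,z]\#\kk(C_2\times C_2)$ from Example \ref{ex.mckay2} by the automorphism that $3$-cycles the nonidentity group elements together with the intertwining coordinate permutation (the paper's $g\mapsto h\mapsto gh\mapsto g$, $x\mapsto z\mapsto y\mapsto x$). Your added remarks on why $(K_4,P,3)$ resists these constructions go beyond what the paper records but do not affect the proof.
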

\begin{proof}
By Lemma \ref{lem.3cycle2}, it suffices to produce an algebra in each case with the corresponding adjacency matrix. The second case was outlined in Example \ref{ex.twist}.


For the first case, let $A=R\#\kk(C_2 \times C_2)$ be the algebra constructed in Example \ref{ex.mckay2}. Recall that $A$ has type $(M,I,3)$ where $M$ is the last matrix in the list above.
Let $\tau$ be the automorphism of $C_2 \times C_2$ given by $g \mapsto h \mapsto gh \mapsto g$. This extends to an automorphism of the skew group ring $A$ by the map $x \mapsto z \mapsto y \mapsto x$. The permutation matrix corresponding to $\tau$ is
\[ N = \begin{psmatrix}1 & 0 & 0 & 0 \\ 0 & 0 & 1 & 0 \\ 0 & 0 & 0 & 1 \\ 0 & 1 & 0 & 0\end{psmatrix}.\]
Then the twist ${}_\tau A$ is twisted graded Calabi--Yau of type $(NM,NP,3)$. It is easy to check that, up to reordering vertices, $NP$ is \eqref{eq.3cycle} and $NM$ is the first matrix in the statement of the theorem.
\end{proof}

\subsection{Two two-cycles}\label{sec.22cycle}

In this case, $P$ is the matrix representing two two-cycles. Explicitly, we take
\begin{align}\label{eq.22cycle}
P = \begin{psmatrix}0 & 1 & 0 & 0 \\ 1 & 0 & 0 & 0 \\ 0 & 0 & 0 & 1 \\ 0 & 0 & 1 & 0\end{psmatrix}.
\end{align}

\begin{lemma}\label{lem.22cycle}
Let $P$ be as in \eqref{eq.22cycle} and let $(M,P,\degs)$ be an algebra satisfying Hypothesis \ref{hyp.main} with $|Q_0|=4$. The matrix $M$ has the form 
\[ M =  \begin{psmatrix}a & b & c & d\\b & a & d & c \\e & f & g & h \\f & e & h & g\end{psmatrix}\]
with $a+g \leq 2\degs$. Without loss of generality, assume $g \leq a$ and set
\[ \gamma = (b-1)^2 + (h-1)^2 + 2ce + 2df - 2.\]
The following tables list all valid possibilities for $(a,g)$ along with the corresponding maximum values of $\gamma$:
\[
\begin{array}{|c|c|c|c|c|}
\multicolumn{5}{c}{$\degs=3$} \\ \hline
(a,g) & \gamma_{\text{max}} & & (a,g) & \gamma_{\text{max}} \\ \hline
(0,0) & 6 & & (2,0) & 2 \\ 
(1,0) & 3 & & (2,1) & -1 \\
(1,1) & 4 & & (2,2) & -2 \\ \hline
\multicolumn{5}{c}{~}
\end{array}\qquad
\begin{array}{|c|c|c|c|c|}
\multicolumn{5}{c}{$\degs=4$} \\ \hline
(a,g) & \gamma_{\text{max}} & & (a,g) & \gamma_{\text{max}} \\ \hline
(0,0) & 8 & & (2,0) & 4 \\ 
(1,0) & 5 & & (2,1) & 1 \\ 
(1,1) & 6 & & (2,2) & 0 \\
	&   & & (3,1) & -2 \\\hline
\end{array}
\]
\end{lemma}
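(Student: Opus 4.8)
\emph{Proof proposal.} The plan is to follow the template of Lemmas \ref{lem.3cycle1} and \ref{lem.3cycle2}. Since $M$ and $P$ commute (Theorem \ref{thm.mpoly}) and \eqref{eq.22cycle} realizes the permutation $\pi=(1\,2)(3\,4)$ acting simultaneously on rows and columns, commuting with it amounts to the relations $M_{ij}=M_{\pi(i)\pi(j)}$, which force $M$ into the displayed block form with nonnegative integer entries $a,\dots,h$. As $\det P=1$ and $|Q_0|=4$, Lemma \ref{lem.palindrome} makes $D(t):=\det p(t)$ palindromic of degree $4s$; combined with Theorem \ref{thm.mpoly} (roots of unity, vanishing order $\geq 3$ at $t=1$) palindromicity upgrades that order to $\geq 4$, so $D(t)=(1-t)^4\prod_{i=1}^{2(s-1)}(1-k_it+t^2)$ with real $k_i\in[-2,2]$, exactly as in Lemma \ref{lem.3cycle1}. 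Comparing the $t$-coefficient gives $\operatorname{tr}(M)=2a+2g=4+\sum_i k_i\leq 4s$, whence $a+g\leq 2s$; and since the vertex relabeling $(1\,3)(2\,4)$ fixes $P$, interchanges the diagonal blocks of $M$ (so $a\leftrightarrow g$), and leaves $\gamma$ invariant, we may assume $g\leq a$.

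The core is the $t^2$-coefficient. Writing $p(t)=I+p_1t+p_2t^2+\cdots$ one has $[t^2]D=\operatorname{tr}(p_2)+e_2(p_1)$, where $e_2$ denotes the sum of $2\times 2$ principal minors; here $p_1=-M$, and $p_2=PM^T$ (contributing $\operatorname{tr}(PM^T)=2b+2h$) if $s=3$, while $p_2=0$ if $s\geq 4$. A direct computation gives $e_2(M)=a^2+g^2+4ag-(b^2+h^2+2ce+2df)=a^2+g^2+4ag-\gamma-2b-2h$, so $[t^2]D=a^2+g^2+4ag-\gamma$ if $s=3$ and $=a^2+g^2+4ag-\gamma-2b-2h$ if $s\geq 4$. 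Expanding $(1-t)^4\prod(1-k_it+t^2)$ expresses the same coefficient via $\sum_ik_i$ (already determined) and $e_2(k):=\sum_{i<j}k_ik_j$; equating, and eliminating $e_2(k)$ through $\sum_ik_i^2=(\sum_ik_i)^2-2e_2(k)$ together with $\sum_ik_i^2\leq 4\cdot 2(s-1)$, collapses after routine arithmetic to $\gamma\leq 2s-a^2-g^2$. Since $\gamma\geq (b-1)^2+(h-1)^2-2\geq -2$, this already forces $a^2+g^2\leq 2s+2$, leaving only finitely many pairs $(a,g)$ with $g\leq a$. To sharpen, observe $\sum_ik_i=2(a+g-2)$, so when $a+g$ is odd one has $\sum_ik_i\equiv 2\pmod 4$, which (as $2(s-1)$ is even) forbids all $k_i\in\{-2,2\}$; the extremal admissible choice is then $2(s-1)-1$ values equal to $\pm 2$ and one equal to $0$, giving $\sum_ik_i^2\leq 4\cdot 2(s-1)-4$. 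Feeding this in yields $\gamma\leq 2s-a^2-g^2-2$ whenever $a\not\equiv g\pmod 2$, so $\gamma_{\max}=2s-a^2-g^2-2\,\mathbf 1[a\not\equiv g\bmod 2]$; retaining the pairs with $\gamma_{\max}\geq -2$ reproduces the two tables.

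The delicate point is certifying that $\{\pm 2,\dots,\pm 2,0\}$ really is extremal among \emph{cyclotomic} data with $\sum_ik_i\equiv 2\pmod 4$ — e.g.\ one must rule out trading a $\pm 2,\pm 2$ pair for the conjugate pair $(\sqrt 3,-\sqrt 3)$ from $\Phi_{12}$, which has larger $\sum k^2$ but sum $0$ and hence cannot by itself change the residue class. The clean way around this is to simultaneously block-diagonalize $M$ and $P$ by conjugating each $2\times 2$ circulant block with the Hadamard matrix $\begin{psmatrix}1&1\\1&-1\end{psmatrix}$ (up to normalization): this turns $M$ into $M_+\oplus M_-$ with $M_\pm=\begin{psmatrix}a\pm b & c\pm d\\ e\pm f & g\pm h\end{psmatrix}$ and $P$ into $I_2\oplus(-I_2)$, so $D(t)=\det p_+(t)\det p_-(t)$ with $p_+(t)=I-M_+t+M_+^Tt^{s-1}-t^sI$ and $p_-(t)=I-M_-t-M_-^Tt^{s-1}+t^sI$; each $\det p_\pm(t)$ is a monic element of $\mathbb{Z}[t]$, hence by Gauss's lemma and the irreducibility of cyclotomic polynomials is itself a product of cyclotomics, and one compares coefficients of the two degree-$2s$ factors separately, tracking $\operatorname{tr}(M_\pm^2)$. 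Either way, the residual work is a bounded finite verification over the listed pairs $(a,g)$ and the finitely many $(b,c,d,e,f,h)$ permitted by the weak $\gamma$-bound, which — as with the neighboring lemmas — I would carry out by computer.
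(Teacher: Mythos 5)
Your proposal is correct and follows essentially the same route as the paper: the block form of $M$ from $MP=PM$, the bound $a+g\leq 2s$ from the linear coefficient of $\det p(t)=(1-t)^4\prod(1-k_it+t^2)$, and the determination of $\gamma_{\max}$ for each pair $(a,g)$ by comparing the $t^2$-coefficient against all admissible products of cyclotomic polynomials, finished by a finite computer check. Your explicit formula $\gamma_{\max}=2s-a^2-g^2-2\cdot\mathbf 1[a\not\equiv g\bmod 2]$ and your observation that for $s=4$ the $t^2$-coefficient is $\beta-\gamma-2b-2h$ rather than $\beta-\gamma$ (the paper elides this, harmlessly, since it only inflates the upper bound) are accurate refinements, not departures.
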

\begin{proof}
The matrix $M$ has the given form since $M$ commutes with $P$. Let $p(t)$ be as in \eqref{eq.mpoly} and set $D(t)=\det(p(t))$.
Using the same argument as in Lemma \ref{lem.3cycle1} implies that $D(t)$ has the form given in \eqref{eq.3vertD}. Comparing the $t$ coefficient of the two forms gives
\[ 2(a+g) = 4 + (k_1+\cdots+k_{2(\degs-1)}) \leq 4\degs.\]
This implies that $a+g \leq 2\degs$. Define
\[ \beta = \sum_{i<j} m_{ii} m_{jj} = a^2+g^2+4ag.\]
Since the entries of $M$ are nonnegative integers, then it is clear that $\beta \geq 0$ and $\gamma \geq -2$. 

Fix a particular choice of $(a,g)$ with $a+g \leq 2\degs$, which determines $\beta$. Since the $t^2$ coefficient of $D(t)$ is $\beta-\gamma$, then we can compare this to the $t^2$ coefficient in all possible products of cyclotomic polynomials\footnote{Run tt\_gamcheck(s).}. The tables list all possible tuples in which the maximum possible value of $\gamma$ is at least $-2$.
\end{proof}

The remainder of the argument to produce possible types relies heavily on a computer computation in Sage, with some help from Mathematica. We describe our algorithm here. This is approximately the same as the procedure in \cite[Section 6]{GR1}, but we have made several improvements and streamlined the code\footnote{This is implemented entirely using total\_list(s).}.
\begin{itemize}
\item Fix $\degs$.
\item Choose a pair of diagonal entries $(a,g)$ from the table. 
\item Choose $\gamma$ such that $\gamma \leq \gamma_{\text{max}}$. In the code we use $z=\gamma+2$ so that all values are positive.
\item Choose a generic form $M$ corresponding to the $\gamma$ value. Note that there may be many such forms\footnote{These are created using tbuilder(z).}. In this case, there are at most two indeterminates for any fixed value of $\gamma$.
\item Choose a product of cyclotomic polynomials\footnote{This is lam\_polys($\lambda,P,s$) where $\lambda =-2(a + g)$.} $q(t)$ whose $t^{11}$ coefficient is equal to $-2(a+g)$.
\item Compute the matrix polynomial $p(t)$ of $M$ and compare $\det(p(t)) = q(t)$. We form a system of equations from the first nonzero coefficient of $q(t)-\det(p(t))$ and $\det(p(t))$ evaluated at $1$. We add to this any inequalities that might be forced from the variables so that the matrix is strongly connected. This is then fed into Mathematica to give integer solutions\footnote{All of this is implemented through zsolver()}.
\end{itemize}
This algorithm produces the following list of matrices
\begin{align*}
&\degs=3: &
&\begin{psmatrix}1 & 0 & 2 & 0 \\ 0 & 1 & 0 & 2 \\ 0 & 2 & 1 & 0 \\ 2 & 0 & 0 & 1\end{psmatrix},
\begin{psmatrix}0 & 1 & 1 & 1 \\ 1 & 0 & 1 & 1 \\ 1 & 1 & 0 & 1 \\ 1 & 1 & 1 & 0\end{psmatrix},
\begin{psmatrix}1 & 0 & 1 & 1 \\ 0 & 1 & 1 & 1 \\ 1 & 1 & 1 & 0 \\ 1 & 1 & 0 & 1\end{psmatrix},
\begin{psmatrix}0 & 1 & 2 & 0 \\ 1 & 0 & 0 & 2 \\ 2 & 0 & 0 & 1 \\ 0 & 2 & 1 & 0\end{psmatrix},
\begin{psmatrix}1 & 1 & 1 & 0 \\ 1 & 1 & 0 & 1 \\ 0 & 1 & 1 & 1 \\ 1 & 0 & 1 & 1\end{psmatrix},
\begin{psmatrix}1 & 1 & 1 & 0 \\ 1 & 1 & 0 & 1 \\ 1 & 0 & 0 & 2 \\ 0 & 1 & 2 & 0\end{psmatrix},
\begin{psmatrix}1 & 0 & 1 & 1 \\ 0 & 1 & 1 & 1 \\ 1 & 1 & 0 & 1 \\ 1 & 1 & 1 & 0\end{psmatrix} \\
& & &\begin{psmatrix}1 & 0 & 1 & 0 \\ 0 & 1 & 0 & 1 \\ 0 & 1 & 1 & 0 \\ 1 & 0 & 0 & 1\end{psmatrix},
\begin{psmatrix}1 & 1 & 1 & 0 \\ 1 & 1 & 0 & 1 \\ 1 & 0 & 0 & 0 \\ 0 & 1 & 0 & 0\end{psmatrix},
\begin{psmatrix}0 & 0 & 1 & 0 \\ 0 & 0 & 0 & 1 \\ 1 & 0 & 0 & 0 \\ 0 & 1 & 0 & 0\end{psmatrix},
\begin{psmatrix}1 & 1 & 1 & 0 \\ 1 & 1 & 0 & 1 \\ 0 & 1 & 1 & 0 \\ 1 & 0 & 0 & 1\end{psmatrix},
\begin{psmatrix}1 & 0 & 1 & 0 \\ 0 & 1 & 0 & 1 \\ 0 & 1 & 0 & 1 \\ 1 & 0 & 1 & 0\end{psmatrix},
\begin{psmatrix}1 & 0 & 1 & 0 \\ 0 & 1 & 0 & 1 \\ 0 & 1 & 0 & 0 \\ 1 & 0 & 0 & 0\end{psmatrix}, \\
&\degs=4: &
&\begin{psmatrix}1 & 0 & 1 & 0 \\ 0 & 1 & 0 & 1 \\ 0 & 1 & 1 & 0 \\ 1 & 0 & 0 & 1\end{psmatrix},
\begin{psmatrix}0 & 0 & 1 & 1 \\ 0 & 0 & 1 & 1 \\ 1 & 1 & 0 & 0 \\ 1 & 1 & 0 & 0\end{psmatrix},
\begin{psmatrix}1 & 0 & 1 & 0 \\ 0 & 1 & 0 & 1 \\ 0 & 1 & 0 & 1 \\ 1 & 0 & 1 & 0\end{psmatrix},
\begin{psmatrix}0 & 0 & 2 & 0 \\ 0 & 0 & 0 & 2 \\ 1 & 1 & 0 & 0 \\ 1 & 1 & 0 & 0\end{psmatrix},
\begin{psmatrix}1 & 0 & 1 & 0 \\ 0 & 1 & 0 & 1 \\ 0 & 1 & 0 & 0 \\ 1 & 0 & 0 & 0\end{psmatrix},
\begin{psmatrix}0 & 1 & 1 & 0 \\ 1 & 0 & 0 & 1 \\ 0 & 1 & 0 & 1 \\ 1 & 0 & 1 & 0\end{psmatrix},
\begin{psmatrix}0 & 0 & 1 & 0 \\ 0 & 0 & 0 & 1 \\ 1 & 0 & 0 & 0 \\ 0 & 1 & 0 & 0\end{psmatrix}.
\end{align*}

We now rule out many of the above matrices.

\begin{lemma}\label{lem.22ruleout}
\begin{enumerate}
\item The following matrices are normal but do not have spectral radius $6-\degs$:
\[ \degs=3: \begin{psmatrix}1 & 0 & 1 & 0 \\ 0 & 1 & 0 & 1 \\ 0 & 1 & 1 & 0 \\ 1 & 0 & 0 & 1\end{psmatrix},
\begin{psmatrix}1 & 1 & 1 & 0 \\ 1 & 1 & 0 & 1 \\ 1 & 0 & 0 & 0 \\ 0 & 1 & 0 & 0\end{psmatrix},
\begin{psmatrix}0 & 0 & 1 & 0 \\ 0 & 0 & 0 & 1 \\ 1 & 0 & 0 & 0 \\ 0 & 1 & 0 & 0\end{psmatrix}, \qquad
\degs=4: \begin{psmatrix}0 & 0 & 1 & 0 \\ 0 & 0 & 0 & 1 \\ 1 & 0 & 0 & 0 \\ 0 & 1 & 0 & 0\end{psmatrix}.\]

\item The following matrices produce Hilbert series with negative entries:
\[ \degs=3: \begin{psmatrix}1 & 1 & 1 & 0 \\ 1 & 1 & 0 & 1 \\ 0 & 1 & 1 & 0 \\ 1 & 0 & 0 & 1\end{psmatrix},
\begin{psmatrix}1 & 0 & 1 & 0 \\ 0 & 1 & 0 & 1 \\ 0 & 1 & 0 & 1 \\ 1 & 0 & 1 & 0\end{psmatrix},
\begin{psmatrix}1 & 0 & 1 & 0 \\ 0 & 1 & 0 & 1 \\ 0 & 1 & 0 & 0 \\ 1 & 0 & 0 & 0\end{psmatrix}, \qquad
\degs=4: \begin{psmatrix}1 & 0 & 1 & 0 \\ 0 & 1 & 0 & 1 \\ 0 & 1 & 0 & 0 \\ 1 & 0 & 0 & 0\end{psmatrix},
\begin{psmatrix}0 & 1 & 1 & 0 \\ 1 & 0 & 0 & 1 \\ 0 & 1 & 0 & 1 \\ 1 & 0 & 1 & 0\end{psmatrix}.\]
\end{enumerate}
\noindent Hence, for $P$ as in \eqref{eq.22cycle}, these matrices do not appear in any type of an algebra satisfying Hypothesis \ref{hyp.main} with $|Q_0|=4$.
\end{lemma}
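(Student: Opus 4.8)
The plan is to eliminate each matrix by showing it violates one of two necessary conditions: the spectral radius constraint for normal adjacency matrices (handling part (1)), or nonnegativity of the Hilbert series (handling part (2)). In both cases everything reduces to a finite, explicit computation once the right invariant is identified.

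For part (1), note that every matrix in the list commutes with $P$ and hence has the form displayed in Lemma~\ref{lem.22cycle}. Since $P$ is the permutation matrix of two disjoint transpositions, its $(+1)$-eigenspace is spanned by the mutually orthogonal vectors $(1,1,0,0)$ and $(0,0,1,1)$ and its $(-1)$-eigenspace by $(1,-1,0,0)$ and $(0,0,1,-1)$. A matrix commuting with $P$ preserves each eigenspace, so in this orthogonal basis $M$ is similar to $\diag(M_+,M_-)$, where
\[
M = \begin{psmatrix}a & b & c & d\\b & a & d & c \\e & f & g & h \\f & e & h & g\end{psmatrix}, \qquad
M_+ = \begin{psmatrix} a+b & c+d \\ e+f & g+h\end{psmatrix}, \qquad
M_- = \begin{psmatrix} a-b & c-d \\ e-f & g-h\end{psmatrix}.
\]
Because the change of basis is orthogonal, $M$ is normal exactly when $M_+$ and $M_-$ are both normal, and $\rho(M) = \max\{\rho(M_+),\rho(M_-)\}$, which is read off immediately from the two $2\times 2$ spectra. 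For each matrix in the list I would verify in a line that it is normal and that $\rho(M)\neq 6-s$; then, since \cite[Proposition 8.10(3)]{RR1} forces $\rho(M)=6-s$ for a twisted graded Calabi-Yau algebra of dimension $3$ with normal adjacency matrix $M$, none of these matrices can be the adjacency matrix of an algebra satisfying Hypothesis~\ref{hyp.main}.

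For part (2), suppose toward a contradiction that $A$ has type $(M,P,s)$ for one of the listed matrices. By Theorem~\ref{thm.mpoly}, the matrix-valued Hilbert series of $A$ equals $p(t)\inv$, where $p(t) = I - Mt + PM^Tt^{s-1} - Pt^s$. Since $p(0)=I$, one may expand $p(t)\inv = \sum_{k\geq 0}(Mt - PM^Tt^{s-1} + Pt^s)^k$ in the $t$-adic topology; the matrix coefficients have integer entries, and since the $(i,j)$ entry of the Hilbert series is $\sum_k \dim_\kk(e_iA_ke_j)\,t^k$, each of these coefficients must be entrywise nonnegative. For each matrix in the list I would compute this power series by computer and exhibit a (small) degree $k$ at which some entry is negative, yielding the contradiction. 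Combining the two parts gives the statement.

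I do not expect a genuine obstacle here: the lemma is essentially a verification. The one point requiring a little care is in part (2), where $\det p(t)$ is a product of cyclotomic polynomials, so the coefficients of $p(t)\inv$ do not decay and a priori one cannot bound in advance how far to expand before a negative coefficient appears; in practice it shows up at low degree, and the check is folded into the same Sage routines used for the rest of the section.
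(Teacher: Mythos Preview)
Your proposal is correct and matches the paper's treatment: the paper states this lemma without proof, leaving it as an explicit finite computation, and you have supplied precisely the two verifications (spectral radius via \cite[Proposition 8.10(3)]{RR1} for normal $M$, and nonnegativity of the entries of $p(t)\inv$) that are implicit in the statement. The block decomposition $M\sim\diag(M_+,M_-)$ you give is a clean way to organize part~(1) and is consistent with how the paper handles commuting with $P$ elsewhere.
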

\begin{proof}
Part (1) is easy to check. For part (2), the Hilbert series can be worked out inductively using the expression of $p(t)$ in Theorem \ref{thm.mpoly}. Alternatively, this can be computed using the code in Sage\footnote{Run hilb\_series$(M,P,n,\degs)$.}. In this way, for example, we find that the first matrix for $\degs=3$ has negative entries in degree $4$.
\end{proof}

Before proceeding to the main theorem, we demonstrate how to produce one of the types above, as the explanation is a bit lengthier than some others. We also show that one additional type occurs if and only if one of our missing types from the four-cycle case appears.

\begin{lemma}\label{lem.22bigG}
Let $P$ be as in \eqref{eq.22cycle} and let
\[ M = \begin{psmatrix}1 & 1 & 1 & 0 \\ 1 & 1 & 0 & 1 \\ 1 & 0 & 0 & 2 \\ 0 & 1 & 2 & 0\end{psmatrix}.\]
Then $(M,P,3)$ is the type of an algebra satisfying Hypothesis \ref{hyp.main} with $|Q_0|=4$.
\end{lemma}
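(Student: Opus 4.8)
The plan is to exhibit an explicit algebra $A$, built from the constructions of Section \ref{sec.examples}, whose type is $(M,P,3)$ and which satisfies Hypothesis \ref{hyp.main}; producing such an $A$ is somewhat less routine than in most other cases. A guiding observation is that $A$ cannot be a skew group ring of a three-dimensional AS-regular algebra by a group with exactly four irreducible representations: the diagonal of $M$ is not constant, but for a group with four one-dimensional irreducibles every associated McKay matrix has constant diagonal (equal to the multiplicity of the trivial representation in $R_1$), and the only groups with four irreducibles having higher-dimensional ones, $A_4$ and $D_5$, cannot realize the permutation $P$ — a winding automorphism fixes the three-dimensional irreducible of $A_4$, and fixes both two-dimensional irreducibles of $D_5$, so the induced permutation is never a product of two two-cycles. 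Hence $A$ must be obtained either as a skew group ring involving a larger group $G$ (so that the relevant four vertices sit among a larger set of irreducibles, or the McKay quiver is disconnected with an appropriate four-vertex component), or by applying a graded (Zhang) twist, and possibly an Ore extension, to a more standard algebra.

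I would pursue the latter route. First produce a twisted graded Calabi--Yau algebra $A_0$ of dimension $3$ with $\GKdim A_0 = 3$ whose type $(M_0,P_0,3)$ is realizable by the methods of Section \ref{sec.examples} and for which $M_0$ is a permutation of the rows of $M$: candidates are skew group rings $R\#\kk G$ as in Examples \ref{ex.mckay2} and \ref{ex.twist} (allowing $R$ noncommutative and $G$ larger) and Ore extensions of skew group rings of $\kk[x,y]$ as in Example \ref{ex.ore}. Then, letting $\tau$ be the graded automorphism of $A_0$ whose vertex permutation $N$ satisfies $NM_0 = M$ and $NP_0 = P$, form the graded twist $A := {}_\tau A_0$. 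By Corollary \ref{cor.twist} (via Theorem \ref{thm.AStwist} and \cite[Theorem 5.15]{RR2}), $A$ is twisted graded Calabi--Yau of dimension $3$; by the discussion following Corollary \ref{cor.twist} its type is $(NM_0,NP_0,3) = (M,P,3)$; by Theorem \ref{thm.mpoly} its Hilbert series is $p(t)^{-1}$ with $p$ as in \eqref{eq.mpoly}, whence $\GKdim A = 3$; and by Theorem \ref{thm.presentation} $A$ has the derivation-quotient presentation demanded by Hypothesis \ref{hyp.main}, with connected quiver (read off from $M$) and superpotential of degree $s=3$ (the twist preserving the degrees of relations).

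The real content is thus: identifying the realizable row-permutation $M_0$ of $M$; exhibiting the corresponding group $G$ (and, where needed, the noncommutative AS-regular algebra $R$ and the action, or the Ore data $\rho,\delta$); computing the adjacency matrix of $A_0$ — from $M_{0,ij}=\dim_\kk\Hom_{\kk G}(W_i,R_1\otimes W_j)$ for a skew group ring, or from the formula $M_B+(P')^{-1}$ for an Ore extension; and computing the restriction of the Nakayama automorphism of $A_0$ to the vertices — via the winding automorphism $\Xi^l_{\hdet}$ for the skew group ring and the formula $P(P')^{-1}$ for an Ore extension. The principal obstacle is the permutation bookkeeping: one must verify that the winding (or Ore) permutation, composed with the twisting permutation $N$, gives exactly the two-two-cycle \eqref{eq.22cycle} and not a four-cycle or a single transposition, and also that the intermediate data $(M_0,P_0,3)$ really is a type, i.e. that $M_0$ commutes with $P_0$ — a constraint which, together with the shape of $P$ and the non-constant diagonal of $M$, is precisely what forces the detour through a twist (or the use of a larger group).
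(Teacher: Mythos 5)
There is a genuine gap: your proposal is a strategy outline rather than a proof. The statement is an existence claim, so the entire content lies in exhibiting a concrete algebra, and you explicitly defer this (``the real content is thus: identifying the realizable row-permutation $M_0$ of $M$; exhibiting the corresponding group $G$\dots''). No group, representation, automorphism, or intermediate type $(M_0,P_0,3)$ is produced, and you give no evidence that the route you say you would pursue --- a graded twist ${}_\tau A_0$ with $NM_0=M$ and $NP_0=P$ --- can be made to terminate: you would need a realizable type $(M_0,P_0,3)$ with $M_0=N^{-1}M$ commuting with $P_0=N^{-1}P$, and no candidate is identified. Your preliminary observation (that $M$ cannot be the McKay matrix of a group with exactly four irreducibles acting on a three-dimensional AS-regular algebra, because of the non-constant diagonal) is a reasonable sanity check, but it only narrows the search; it does not advance the construction.

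For comparison, the paper takes the ``larger group'' branch you mention but combines it with an Ore extension rather than a graded twist: it decomposes $M = M' + (P')^{-1}$, where $M'$ is realized as a connected four-vertex component of the (disconnected) McKay quiver of a specific group $G$ from \cite{CHI} --- one with a central element $\lambda$ of order $8$ acting trivially on the two-dimensional representation $V$, so that the McKay quiver splits into components indexed by the eigenvalue of $\lambda$ --- acting on $R=\kk_{-1}[x,y]$ with trivial homological determinant. The resulting algebra $A$ has type $(M',I,2)$, and the Ore extension $A[t;\rho]$, with $\rho$ induced by $\sigma\mapsto-\sigma$ (which swaps $W_0\leftrightarrow W_3$ and $W_1\leftrightarrow W_2$, giving the permutation $P'$), has type $(M'+P',P',3)$, which is $(M,P,3)$ after relabeling vertices. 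To complete your proof you would need either to supply comparable explicit data for your twist-based route, or to carry out a construction of this kind.
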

\begin{proof}
This is a modification of \cite[Example 2.2]{GR1}.
Consider the group below which appears in \cite{CHI},
\[ G = \langle \sigma,\tau,\lambda \mid \sigma^8 = \lambda^8 = \tau^2, \lambda\sigma=\sigma\lambda, \lambda\tau=\tau\lambda, \lambda\sigma\tau=\tau\sigma\inv\rangle.\]
Let $\tornado$ be a primitive $8$th root of 1. Assume that $V$ is the 2-dimensional representation of $G$ given by
\[
\sigma \mapsto \begin{psmatrix}\tornado & 0 \\ 0 & \tornado\inv \end{psmatrix}, \quad
\tau \mapsto \begin{psmatrix}0 & 1 \\ 1 & 0 \end{psmatrix}, \quad
\lambda \mapsto \begin{psmatrix}1 & 0 \\ 0 & 1\end{psmatrix}.
\]
For each $i \in \{0,1,\hdots,7\}$ there is a 2-dimensional representation of $G$, denoted $W_i$, given by
\[
\sigma \mapsto \begin{psmatrix}\tornado^i & 0 \\ 0 & \tornado^{-i-1} \end{psmatrix}, \quad
\tau \mapsto \begin{psmatrix}0 & 1 \\ 1 & 0 \end{psmatrix}, \quad
\lambda \mapsto \begin{psmatrix}\tornado & 0 \\ 0 & \tornado\end{psmatrix}.
\]
It is clear, via the coordinate switch, that $W_i \iso W_{7-i}$ as representations. However, the representations $W_0,W_1,W_2,W_3$ are seen to be non-isomorphic by comparing eigenvalues of $\sigma$.

Note that $\lambda$ is central in $G$ of order $8$. Hence, any finite-dimensional representation $X$ of $G$ decomposes as $X = X_0 \oplus \cdots \oplus X_7$ where $X_i$ is a representation of $G$ on which $\lambda$ acts by scalar multiplication by $\tornado^i$. The action of $\lambda$ on $V$ is trivial. Consequently, the McKay quiver of the action decomposes as a disjoint union of quivers $Q_i$. In each $Q_i$, the vertices correspond to irreducible representations where $\lambda$ acts by $\tornado^i$.

One checks that $W_0,W_1,W_2,W_3$ are the only irreducible representations of $G$ in which $\lambda$ acts by $\tornado$. Furthermore, it is easy to verify that $V \tensor W_i \iso W_{i-1} \oplus W_{i+1}$ (where indices are taken mod $8$). Using the isomorphisms amongst the $W_i$ above, the component $Q_1$ of $Q$ is
\[ 
  \xymatrix{
{}\bullet \ar@/^/[rr] \ar@(ul,dl)[] & &
{}\bullet \ar@/^/[rr] \ar@/^/[ll] & &
{}\bullet \ar@/^/[rr] \ar@/^/[ll] & &
\bullet \ar@(ur,dr)[] \ar@/^/[ll]}
\]
where the McKay matrix is
\[M' = \begin{psmatrix}1 & 1 & 0 & 0 \\ 1 & 0 & 1 & 0 \\ 0 & 1 & 0 & 1\\ 0 & 0 & 1 & 1\end{psmatrix}.\]

The group $G$ acts on $R=\kk_{-1}[x,y]$ by graded automorphism. The superpotential corresponding to $R$ is $\omega=xy+yx$ and it is clear that the $G$-action corresponding to $V$ is trivial on $\omega$. Then $R\#\kk G$ is Morita equivalent to a derivation-quotient algebra $A$ of type $(M',I,3)$.

Now observe that there is an automorphism $\rho$ of $\kk G$ given by $\sigma \mapsto \sigma\inv$, $\tau \mapsto \tau$, $\lambda \mapsto \lambda$. This automorphism interchanges $W_0 \leftrightarrow W_3$ and $W_1 \mapsto W_2$. Thus the matrix corresponding to this automorphism is 
\[ P' = \begin{psmatrix}0 & 0 & 0 & 1 \\ 0 & 0 & 1 & 0 \\ 0 & 1 & 0 & 0 \\ 1 & 0 & 0 & 0\end{psmatrix}.\]
Hence, the Ore extension $A[t;\rho]$ is twisted graded Calabi--Yau of dimension 3 of type $(M'+P',P',3)$.
A simple renumbering of vertices gives an algebra of type $(M,P,3)$ as in the statement.
\end{proof}

\begin{lemma}\label{lem.22iff}
Let $P$ be as in \eqref{eq.22cycle} and let $P'$ be as in \eqref{eq.4cycle}. Let
\[ M = \begin{psmatrix}1 & 1 & 1 & 0 \\ 1 & 1 & 0 & 1 \\ 0 & 1 & 1 & 1 \\ 1 & 0 & 1 & 1\end{psmatrix}
\quad\text{and}\quad
M' = \begin{psmatrix}0 & 1 & 1 & 1 \\ 1 & 0 & 1 & 1 \\ 1 & 1 & 0 & 1 \\ 1 & 1 & 1 & 0\end{psmatrix}.\]
Then $(M,P,3)$ is the type of an algebra satisfying Hypothesis \ref{hyp.main} with $|Q_0|=4$ if and only if $(M',P',3)$ is.
\end{lemma}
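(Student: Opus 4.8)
The plan is to realize the passage between the two types by a single graded twist, invoking Corollary \ref{cor.twist} and the type-transformation rule from Section \ref{sec.twist}: a graded automorphism of a twisted graded Calabi--Yau algebra $A$ of type $(M_0, P_0, s)$ whose induced vertex permutation has matrix $N$ produces ${}_\tau A$ of type $(N M_0, N P_0, s)$. First I would pin down the permutation. Let $\nu = (1\,3\,2\,4)$ and let $N$ be its permutation matrix. A direct check gives $M = J - N\inv$ and $M' = J - I$ (with $J$ the all-ones matrix), that $N$ commutes with both $M$ and $P$, that $N^2 = P$, that $NM = M'$, and that $NP = N^3$ is a four-cycle, hence equal to $P'$ after conjugating by a suitable permutation $\Pi$ — and since every relabeling fixes $M' = J - I$, one can take $\Pi$ to fix $M'$. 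Thus $(NM, NP, 3)$ is exactly $(M', P', 3)$ up to reordering vertices; symmetrically $N\inv M' = M$ and $N\inv P'$ is conjugate to $P$ by a vertex relabeling.

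For the forward implication, suppose $(M, P, 3)$ is realized by $A = \kk Q/(\delta_a \omega)$ with $Q$ the quiver of $M$. Since $N$ commutes with $M$, $\nu$ is a quiver automorphism of $Q$. I would then promote $\nu$ to a graded automorphism $\tau$ of $A$ (possibly after replacing $A$ by another algebra of the same type; this is the key point, discussed below), put $B = {}_\tau A$, and conclude: $B$ is twisted graded Calabi--Yau of dimension $3$ by Corollary \ref{cor.twist}; the twist does not change the underlying graded vector space, so $B$ has the Hilbert series of $A$ and hence $\GKdim B = 3$; and the quiver $Q'$ of $B$ has adjacency matrix $NM = M' = J - I$, which is strongly connected, so $Q'$ is connected and Theorem \ref{thm.presentation} presents $B$ as a degree-$3$ derivation-quotient algebra. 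Hence $B$ satisfies Hypothesis \ref{hyp.main}, and its type $(NM, NP, 3)$ becomes $(M', P', 3)$ after the relabeling $\Pi$. The converse is symmetric: if $(M', P', 3)$ is realized by $A'$, then $\nu\inv$ is automatically a quiver automorphism of the complete quiver $Q' = K_4$; lifting it (same key point) to $\tau'$, the algebra ${}_{\tau'}A'$ realizes $(M, P, 3)$ by the identical argument.

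The one step that is not bookkeeping — and the place I expect the real work — is lifting $\nu$ (respectively $\nu\inv$) to a graded automorphism of an algebra of the prescribed type; equivalently, arranging that the defining twisted superpotential may be chosen $\nu$-invariant up to scalar. Here I would exploit the rigidity of the degree-$3$ case: by Theorem \ref{thm.presentation} and \cite{BSW} the twisted superpotential is determined up to scalar by the algebra, while $N$ commuting with $P$ together with $N^2 = P$ forces $\nu_*\omega$ to again be a twisted superpotential whose derivation-quotient is twisted graded Calabi--Yau of type $(M, P, 3)$; one then argues that $\nu_*\omega$ and $\omega$ agree up to scalar, if necessary after replacing $A$ by a different algebra of the same type (for instance by averaging $\omega$ over its $\langle \nu \rangle$-orbit and verifying the Calabi--Yau property persists). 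Pinning this down is the delicate part; with it in hand, the twist machinery and the identities above close out both directions.
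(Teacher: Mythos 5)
Your strategy --- pass between the two types by a single graded twist, using the transformation rule $(M_0,P_0,s)\mapsto(NM_0,NP_0,s)$ from Section \ref{sec.twist} together with a vertex relabeling --- is exactly the paper's, and your bookkeeping is correct and more explicit than what the paper writes down: with $\nu=(1\,3\,2\,4)$ one indeed has $N^2=P$, $NM=M'$, and $NP=N\inv$ conjugate to $P'$. The gap is the one you flag yourself: producing a graded \emph{algebra} automorphism $\tau$ realizing the vertex permutation (a quiver automorphism commuting with $M$ need not preserve the relation ideal). Your proposed repairs do not close it: the type $(M,P,3)$ does not determine the twisted superpotential up to scalar (non-isomorphic algebras share a type), so you cannot conclude that $\nu_*\omega$ is proportional to $\omega$; and averaging $\omega$ over its $\langle\nu\rangle$-orbit may vanish or yield a potential whose derivation-quotient algebra is no longer twisted Calabi--Yau, so ``verifying the Calabi--Yau property persists'' is not a routine check.

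What you are missing is that the two directions are not symmetric on this point, and the paper exploits the easy one. Starting from an algebra $A'$ of type $(M',P',3)$, the required twisting automorphism is already in hand: by \cite[Proposition 5.2]{RR1} the Nakayama automorphism $\mu_{A'}$ is a graded automorphism of $A'$, and its restriction to $(A')_0$ is by definition the four-cycle $P'$ --- which, after your relabeling $\Pi$, is exactly your $\nu\inv$ (note $N\inv=N^3=NP$, and $\Pi$ conjugates $NP$ to $P'$). No lifting or superpotential argument is needed; one twists $A'$ by $\mu_{A'}$ (or its inverse), applies Corollary \ref{cor.twist} and the type formula, and relabels to land on $(M,P,3)$. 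This is how the paper proves the implication $(M',P',3)\Rightarrow(M,P,3)$. For the reverse implication the needed permutation $\nu$ is a square root of the Nakayama permutation $P$ of $A$, and $\mu_A$ only realizes $P$ itself; the paper disposes of this with ``the converse is similar,'' so your concern is genuinely the live issue in that direction, but your argument as stated does not resolve it either. I would restructure the write-up to run the $A'\to A$ direction through the Nakayama automorphism, and isolate the $A\to A'$ direction as the one requiring a graded automorphism of $A$ lifting a square root of $P$.
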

\begin{proof}
Suppose $(M',P',3)$ is the type of a twisted graded Calabi--Yau algebra $A'$ satisfying Hypothesis \ref{hyp.main} with $|Q_0|=4$. Then $M'P'=P'M'$, and so we can consider the graded automorphism $\tau$ of $A'$ which cycles the vertices $1 \to 4 \to 3 \to 2 \to 1$. Thus, $P'$ is the automorphism of $\tau$ restricted to $(A')_0$. Then the graded twist of $A'$ by $\tau$ has type $NM'$ and $NP'$. Relabeling the vertices $2 \to 3 \to 4 \to 2$ gives the intended type $(M,P,3)$. The converse is similar.
\end{proof}

We are now ready for our main result of this section. 

\begin{theorem}\label{thm.22cycle}
Let $P$ be as in \eqref{eq.22cycle} and let $(M,P,\degs)$ be a type of an algebra satisfying Hypothesis \ref{hyp.main} with $|Q_0|=4$. Then $M$ is one of
\begin{align*}
&\degs=3: &
&\begin{psmatrix}1 & 0 & 2 & 0 \\ 0 & 1 & 0 & 2 \\ 0 & 2 & 1 & 0 \\ 2 & 0 & 0 & 1\end{psmatrix},
\begin{psmatrix}0 & 1 & 1 & 1 \\ 1 & 0 & 1 & 1 \\ 1 & 1 & 0 & 1 \\ 1 & 1 & 1 & 0\end{psmatrix},
\begin{psmatrix}1 & 0 & 1 & 1 \\ 0 & 1 & 1 & 1 \\ 1 & 1 & 1 & 0 \\ 1 & 1 & 0 & 1\end{psmatrix},
\begin{psmatrix}0 & 1 & 2 & 0 \\ 1 & 0 & 0 & 2 \\ 2 & 0 & 0 & 1 \\ 0 & 2 & 1 & 0\end{psmatrix},
\begin{psmatrix}1 & 0 & 1 & 1 \\ 0 & 1 & 1 & 1 \\ 1 & 1 & 0 & 1 \\ 1 & 1 & 1 & 0\end{psmatrix}
\begin{psmatrix}1 & 1 & 1 & 0 \\ 1 & 1 & 0 & 1 \\ 1 & 0 & 0 & 2 \\ 0 & 1 & 2 & 0\end{psmatrix},
\begin{psmatrix}1 & 1 & 1 & 0 \\ 1 & 1 & 0 & 1 \\ 0 & 1 & 1 & 1 \\ 1 & 0 & 1 & 1\end{psmatrix}^*, \\
&\degs=4: &
&\begin{psmatrix}1 & 0 & 1 & 0 \\ 0 & 1 & 0 & 1 \\ 0 & 1 & 1 & 0 \\ 1 & 0 & 0 & 1\end{psmatrix},
\begin{psmatrix}0 & 0 & 1 & 1 \\ 0 & 0 & 1 & 1 \\ 1 & 1 & 0 & 0 \\ 1 & 1 & 0 & 0\end{psmatrix},
\begin{psmatrix}1 & 0 & 1 & 0 \\ 0 & 1 & 0 & 1 \\ 0 & 1 & 0 & 1 \\ 1 & 0 & 1 & 0\end{psmatrix}^*,
\begin{psmatrix}0 & 0 & 2 & 0 \\ 0 & 0 & 0 & 2 \\ 1 & 1 & 0 & 0 \\ 1 & 1 & 0 & 0\end{psmatrix}^*.
\end{align*}
Conversely, each is the adjacency matrix of a quiver that supports a twisted graded Calabi--Yau algebra whose Nakayama automorphism restricts to \eqref{eq.4cycle} on $Q_0$, except possibly the ones marked by $^*$.
\end{theorem}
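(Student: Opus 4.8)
The plan has two halves, one for each assertion.

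The classification half — that every admissible $M$ lies on the displayed list — is bookkeeping on top of work already in place. Lemma~\ref{lem.22cycle} pins down the shape of $M$, the bound $a+g\le 2s$, and the finitely many relevant diagonal pairs $(a,g)$ together with the cap $\gamma_{\text{max}}$; the algorithm described just before Lemma~\ref{lem.22ruleout}, executed for $s=3$ and $s=4$, then outputs the two displayed lists, which by construction contain every $M$ for which the matrix polynomial $p(t)$ of \eqref{eq.mpoly} is a product of cyclotomic polynomials, $\det(p(t))$ vanishes at $t=1$ to order at least three, and the quiver is strongly connected. Lemma~\ref{lem.22ruleout} deletes from these lists the matrices that are normal but have spectral radius $\ne 6-s$ (contradicting \cite[Proposition~8.10(3)]{RR1}) and those whose putative Hilbert series $(p(t))^{-1}$ has a negative coefficient (impossible for a graded algebra). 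What survives is exactly the list in the statement, which proves necessity.

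For the realization half I would go through the unstarred matrices one at a time and exhibit for each an explicit twisted graded Calabi-Yau algebra of the stated type, collecting the data in a table as in Theorem~\ref{thm.4cycle}. Most of the unstarred $s=3$ matrices — those whose quiver is strongly connected with positive diagonal — should come from McKay quivers: take $R=\kk[x,y,z]$, let $C_4$ or $C_2\times C_2$ act by a suitable diagonal (or, where needed, monomial) representation, and check as in the proof of Theorem~\ref{thm.4cycle} that $R\#\kk G$ is Morita equivalent to a derivation-quotient algebra with adjacency matrix $M$ and that the winding automorphism $\Xi^l_{\hdet}$ induces the permutation \eqref{eq.22cycle} on the vertices; for one or two of them it will be necessary to replace $\kk[x,y,z]$ by another three-dimensional Artin-Schelter regular algebra. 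The two $s=4$ matrices are handled the same way with $R$ a two-generated cubic Artin-Schelter regular algebra of dimension three (base type $(2,I,4)$), the Nakayama permutation being read off from the homological determinant via $g(\Omega)=\hdet(g)\Omega$ \cite[Theorem~3.3]{MS}. Two cases are already settled in the preceding lemmas: $\begin{psmatrix}1&1&1&0\\1&1&0&1\\1&0&0&2\\0&1&2&0\end{psmatrix}$ by the Ore-extension construction of Lemma~\ref{lem.22bigG}, and $\begin{psmatrix}1&1&1&0\\1&1&0&1\\0&1&1&1\\1&0&1&1\end{psmatrix}$, which by Lemma~\ref{lem.22iff} occurs precisely when the (likewise unresolved) four-cycle matrix $C(0,1,1,1)$ does — hence its $^*$. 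For the remaining stubborn matrices I would invoke a graded twist: take a sufficiently symmetric skew group ring $A$ (for instance the $C_2\times C_2$-action of Example~\ref{ex.mckay2}, or another skew group ring over $\kk[x,y,z]$), observe that $M$ commutes with a cyclic relabelling $N$ of the vertices, and form ${}_\tau A$ for the corresponding graded automorphism $\tau$; by Corollary~\ref{cor.twist} this is again twisted graded Calabi-Yau, now of type $(NM,NP,s)$, which can be arranged to be the one desired. The matrices marked $^*$ are exactly those not reached by any of these constructions and are left open.

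The realization half is where essentially all the difficulty sits; the classification is mechanical once the computer search and Lemma~\ref{lem.22ruleout} are available. The genuinely delicate point — and the reason the graded-twist material of Section~\ref{sec.twist} was developed — is that the two standard sources of examples (skew group rings of $\kk[x,y,z]$ and Ore extensions of skew group rings of $\kk[x,y]$) provably do not produce every matrix on the list, so for the leftover matrices one must first recognize that a twist is unavoidable and then pair the matrix with a base algebra whose quiver carries enough symmetry to admit the required graded automorphism. A secondary, smaller obstacle is identifying which non-polynomial three-dimensional Artin-Schelter regular algebras to feed into the skew-group-ring construction for the handful of $s=3$ matrices that $\kk[x,y,z]$ cannot realize.
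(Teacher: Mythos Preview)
Your proposal is correct and follows essentially the same route as the paper: the classification half is exactly the pipeline Lemma~\ref{lem.22cycle} $\to$ algorithm $\to$ Lemma~\ref{lem.22ruleout}, and the realization half uses McKay quivers for $C_4$ and $C_2\times C_2$ acting on $\kk[x,y,z]$, the Ore extension of Lemma~\ref{lem.22bigG}, a graded twist of the Example~\ref{ex.mckay2} algebra, Lemma~\ref{lem.22iff} for the conditional starred case, and skew group rings over two-generated cubic AS regular algebras for $s=4$. One small miscalibration: in the paper no $s=3$ matrix actually requires a non-polynomial three-dimensional AS regular base --- the fifth matrix that resists a direct McKay construction is handled purely by a graded twist of $\kk[x,y,z]\#\kk(C_2\times C_2)$, so your anticipated ``secondary obstacle'' does not arise there.
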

\begin{proof}
We begin with the case $\degs=3$. The first two arise as skew groups rings $\kk[x,y,z]\#\kk C_4$. Take $C_4=\grp{g}$, then the actions are given by $\diag(1,-i,-i)$ and $\diag(-1,i,-i)$, respectively, where $i=\sqrt{-1}$. The second two arise as skew groups rings $\kk[x,y,z]\#\kk(C_2 \times C_2)$. The action of $C_2\times C_2=\grp{g,h}$ is given by $(\diag(1,-1,1), \diag(1,1,-1))$ and $(\diag(-1,-1,1), \diag(1,1,-1))$, respectively.

Consider the skew group ring $A=R\#\kk(C_2 \times C_2)$ constructed in Example \ref{ex.mckay2}. Recall that $A$ has type $(M',I,3)$ where 
\[ M'= \begin{psmatrix}0 & 1 & 1 & 1 \\ 1 & 0 & 1 & 1 \\  1 & 1 & 0 & 1 \\ 1 & 1 & 1 & 0\end{psmatrix}.\]
Let $\tau$ be the automorphism of $\kk(C_2 \times C_2)$ determined by $g \mapsto -g$ and $h \mapsto h$. This extends to an automorphism of $A$ and the permutation matrix $P$ corresponding to $\tau$ is \eqref{eq.22cycle}. Hence, ${}_\tau A$ is twisted graded Calabi--Yau with type $(PM',P,3)$. 
This gives the fifth matrix in the list.

The next-to-last matrix is detailed in Lemma \ref{lem.22bigG}. By Lemma \ref{lem.22iff}, the final matrix belongs to the type of an algebra satisfying Hypothesis \ref{hyp.main} if and only if the missing matrix in Theorem \ref{thm.4cycle} does with appropriate permutation matrices.

Now consider the case $\degs=4$. For the first matrix, consider the three-dimensional Artin--Schelter regular algebra $R$ with superpotential
\[ \omega = y^2x^2 + xy^2x + yx^2y + x^2y^2.\]
We let $C_4=\grp{g}$ act on $R$ by $g=\diag(1,i)$. Then $g(\omega)=-\omega$ and so $\hdet g = -1$. Thus, the McKay quiver of the action is the matrix given, and so $R\#\kk C_4$ is Morita equivalent to a derivation-quotient algebra on that quiver.

The argument for the second matrix is similar. Consider the three-dimensional Artin--Schelter regular algebra $R$ with superpotential
\[ \omega = x^2y^2 + ixy^2x - y^2x^2 -i yx^2y.\]
We let $C_4=\grp{g}$ act on $R$ by $g=\begin{psmatrix}0 & 1 \\ -1 & 0\end{psmatrix}$. Then $g(\omega)=-\omega$ and so $\hdet g = -1$.
\end{proof}

The two starred matrices in the $\degs=4$ case of Theorem \ref{thm.22cycle} are not normal. We expect that these do not appear as the adjacency matrix of an algebra satisfying Hypothesis \ref{hyp.main}, though we do not have a good argument for ruling them out.

\bibliographystyle{amsplain}

\end{document}